\theoremstyle{plain}
\newtheorem{theorem}{Theorem}[section]
\newtheorem{proposition}[theorem]{Proposition}
\newtheorem{corollary}[theorem]{Corollary}
\theoremstyle{remark}
\newtheorem{remark}[theorem]{Remark}
\theoremstyle{definition}
\newtheorem{example}[theorem]{Example}
\newtheorem{definition}[theorem]{Definition}
\newtheorem *{Theorem A}{Theorem A}
\newtheorem *{Theorem B}{Theorem B}
\numberwithin{equation}{section}
\DeclareMathOperator{\id}{id}
\DeclareMathOperator{\End}{End}
\DeclareMathOperator{\supp}{supp}
\DeclareMathOperator{\Hom}{Hom}
\begin{document}

\title[Categories and weak equivalences]{Categories and weak equivalences of graded algebras}

\author{Alexey Gordienko}
\address{Vrije Universiteit Brussel, Belgium}
\email{alexey.gordienko@vub.ac.be}

\author{Ofir Schnabel}
\address{Department of Mathematics, Technion - Israel Institute of Technology, Haifa, Israel}
\email{os2519@yahoo.com}

\keywords{Algebra, grading, adjoint functor, oplax $2$-functor, (co)limit, (co)product, (co)equalizer.}

\begin{abstract}
When one studies the structure of graded algebras (e.g. graded ideals, graded subspaces, radicals,\dots) or graded polynomial identities, the grading group itself does not play an important role, but can be replaced by any other group that realizes the same grading. Here we come to the notion of weak equivalence of gradings: two gradings are weakly equivalent if there exists an isomorphism between the graded algebras that maps each graded component onto a graded component. Each group grading on an algebra can be weakly equivalent to $G$-gradings for many different groups $G$, however it turns out that there is one distinguished group among them called the universal group of the grading.
In this paper we study categories and functors related to the notion of weak equivalence of gradings.
In particular, we introduce an oplax 2-functor that assigns to each grading its support and show that the universal grading group functor has neither left nor right adjoint.
\end{abstract}

\subjclass[2010]{Primary 16W50; Secondary 18A20, 18A30, 18A40, 18D20.}

\thanks{The first author is supported by Fonds Wetenschappelijk Onderzoek~--- Vlaanderen post doctoral fellowship (Belgium). The second author was supported by the Israel Science Foundation (grant No. 1516/16).}

\maketitle

\section{Introduction}

When studying graded algebras, one encounters
three main types of equivalences of gradings, namely, isomorphism (see Definition~\ref{DefGradedIsomorphism} below), equivalence (see Definition~\ref{DefGinosarEquivalent}) and weak equivalence (see Definition~\ref{def:weakly}).
 In this paper, we develop a categorical
framework to deal with graded algebras up to a weak equivalence, which
is the least restrictive equivalence among the three equivalences
mentioned above.

Recall that a decomposition $\Gamma \colon A=\bigoplus_{g \in G} A^{(g)}$
of an algebra $A$ over a field $F$ into a direct sum of subspaces $A^{(g)}$
is a \textit{grading} on $A$ by a group $G$
if $A^{(g)}A^{(h)}\subseteq A^{(gh)}$ for all $g,h\in G$.
Then we say that $G$ is the \textit{grading group} of $\Gamma$ and the algebra $A$ is \textit{graded} by $G$. The subspaces $A^{(g)}$ are called \textit{homogeneous} or \textit{graded} components
of $A$ and the elements of $\bigcup_{g\in G} A^{(g)}$ are called \textit{homogeneous} or \textit{graded}.

Let \begin{equation}\label{EqTwoGroupGradings}\Gamma_1 \colon A=\bigoplus_{g \in G} A^{(g)},\qquad \Gamma_2
\colon B=\bigoplus_{h \in H} B^{(h)}\end{equation} be two gradings where $G$
and $H$ are groups and $A$ and $B$ are algebras.

Now we are ready to introduce three types of equivalences of gradings.
The most restrictive case is when we require that both grading groups
coincide:

\begin{definition}[{e.g. \cite[Definition~1.15]{ElduqueKochetov}}]
\label{DefGradedIsomorphism}
The gradings~(\ref{EqTwoGroupGradings}) are \textit{isomorphic} if $G=H$ and there exists an isomorphism $\varphi \colon A \mathrel{\widetilde\to} B$
of algebras such that $\varphi(A^{(g)})=B^{(g)}$
for all $g\in G$.
In this case we say that $A$ and $B$ are \textit{graded isomorphic}.
\end{definition}

In some cases, such as in~\cite{ginosargradings}, less restrictive requirements are more
suitable.
\begin{definition}[{\cite[Definition~2.3]{ginosargradings}}]\label{DefGinosarEquivalent}
The gradings~(\ref{EqTwoGroupGradings})  are \textit{equivalent} if there exists an isomorphism
$\varphi \colon A \mathrel{\widetilde\to} B$
of algebras and an isomorphism $\psi \colon G \mathrel{\widetilde\to} H$
of groups such that $\varphi(A^{(g)})=B^{\bigl(\psi(g)\bigr)}$
for all $g\in G$.
\end{definition}
\begin{remark}
The notion of graded equivalence was considered by Yu.\,A.~Bahturin, S.\,K.~Seghal, and M.\,V.~Zaicev in~\cite[Remark after Definition 3]{BahturinZaicevSeghalGroupGrAssoc}.
In the paper of V.~Mazorchuk and K.~Zhao~\cite{Mazorchuk} it appears under the name of graded isomorphism.
A.~Elduque and M.\,V.~Kochetov refer to this notion as a weak isomorphism
of gradings~\cite[Section~3.1]{ElduqueKochetov}.
More on differences in the terminology in
graded algebras can be found in \cite[\S 2.7]{ginosargradings}.
\end{remark}

 If one studies the graded structure of a graded algebra or its graded polynomial identities~\cite{AljaGia, AljaGiaLa,
BahtZaiGradedExp, GiaLa, ASGordienko9}, then it is not material by elements of which group the graded components are indexed.
A replacement of the grading group leaves both graded subspaces and graded ideals graded.
In the case of graded polynomial identities reindexing
the graded components leads only to renaming the variables.
 Here we come naturally to the notion of weak equivalence of gradings.

\begin{definition}\label{def:weakly}
The gradings~(\ref{EqTwoGroupGradings}) are \textit{weakly equivalent}, if there
exists an isomorphism $\varphi \colon A \mathrel{\widetilde\to}B$
of algebras such that for every $g\in G$ with $A^{(g)}\ne 0$ there
exists $h\in H$ such that $\varphi\left(A^{(g)}\right)=B^{(h)}$.
\end{definition}
 \begin{remark}
 This notion appears in~\cite[Definition~1.14]{ElduqueKochetov}
 under the name of equivalence.
 We have decided to add here the adjective ``weak'' in order to avoid confusion
 with Definition~\ref{DefGinosarEquivalent}.
 \end{remark}

 \begin{example}\label{ExampleM2gradingEquiv}
 Let $A=M_2(F)$, the full $2\times 2$ matrix algebra over a field $F$.
 Denote by $e_{ij}$ the matrix having $1$ in the cell $(i,j)$
 and $0$ in the other cells.
 Then $\Gamma_1 \colon A=\bigoplus_{n \in \mathbb Z} A^{(n)}$
 where $A^{(-1)}=F e_{21}$, $A^{(1)}=F e_{12}$,
 $A^{(0)}=Fe_{11}\oplus Fe_{22}$, $A^{(n)}=0$ for $|n| > 1$,
 is a $\mathbb Z$-grading on $A$.
 At the same time, $\Gamma_2 \colon A=\bigoplus_{\sigma \in S_3} A^{(\sigma)}$
 where $A^{\bigl((123)\bigr)}=F e_{21}$, $A^{\bigl((132)\bigr)}=F e_{12}$,
 $A^{(e)}=Fe_{11}\oplus Fe_{22}$, $A^{(\sigma)}=0$ for $\sigma \ne e, (123),(132)$
 (here $S_3$ is the symmetric group on three elements and $(123),(132)$ are the cycles of length $3$),
 is an $S_3$-grading on $A$ weakly equivalent to $\Gamma_1$ via $\varphi = \id_A$.
 \end{example}

For a grading $\Gamma \colon A=\bigoplus_{g \in G} A^{(g)}$, we denote by $\supp \Gamma := \lbrace g\in G \mid A^{(g)}\ne 0\rbrace$ its support.
\begin{remark}Each weak equivalence $\varphi$ between gradings $\Gamma_1$ and $\Gamma_2$
 induces a bijection $\psi \colon \supp \Gamma_1 \mathrel{\widetilde\to} \supp \Gamma_2$ defined by $\varphi\left( A^{(g)}\right) = B^{\left(\psi(g)\right)}$ for $g\in \supp \Gamma_1$.
\end{remark}

Obviously, if gradings are isomorphic, then they are equivalent and if they are equivalent then they are also weakly equivalent.
It is important to notice that none of the converse is true.
However, if gradings~(\ref{EqTwoGroupGradings}) are weakly equivalent
and $\varphi \colon A \mathrel{\widetilde\to}B$ is the corresponding isomorphism of algebras,
then $\Gamma_3 \colon A=\bigoplus_{h \in H} \varphi^{-1}\left( B^{(h)}\right)$ is a grading on $A$ isomorphic to $\Gamma_2$ and the grading $\Gamma_3$ is obtained from $\Gamma_1$ just by reindexing the homogeneous components.
Therefore, when gradings~(\ref{EqTwoGroupGradings}) are weakly
equivalent, we say that $\Gamma_1$ \textit{can be regraded} by $T$.
If $A=B$ and $\varphi$ in Definition~\ref{def:weakly} is the identity map, we say that $\Gamma_1$ and $\Gamma_2$ are realizations
of the same grading on $A$ as, respectively, an $G$- and a $H$-grading.

As we have mentioned above, for many applications it is not important
which particular grading among weakly equivalent ones
we consider. Thus, if it is possible, one can try to replace
the grading group by a better one. For example, in~\cite{ClaseJespersDelRio} and~\cite{GordienkoSchnabel}  the authors studied the question, whether it is always possible to regrade a grading by a finite group. The situation, when this is possible, is very convenient since the algebra graded by a finite group $G$
is an $FG$-comodule algebra and, in turn, an $(FG)^*$-module algebra
where $FG$ is the group algebra of $G$, which is a Hopf algebra, and $(FG)^*$ is its dual.
In this case one can use the techniques of Hopf algebra actions instead of working with a grading directly (see e.g.~\cite{ASGordienko3}).

Each group grading on an algebra can be realized as a $G$-grading for many different groups $G$, however it turns out that there is one distinguished group among them \cite[Definition~1.17]{ElduqueKochetov}, \cite{PZ89}.

\begin{definition}\label{def:universal}
Let $\Gamma$ be a group grading on an algebra $A$. Suppose that $\Gamma$ admits a realization
as a $G_\Gamma$-grading for some group $G_\Gamma$. Denote by $\varkappa_\Gamma$ the corresponding embedding
$\supp \Gamma \hookrightarrow G_\Gamma$. We say that $(G_\Gamma,\varkappa_\Gamma)$ is the \textit{universal group of the grading $\Gamma$} if for any realization of $\Gamma$ as a grading by a group $G$
with $\psi \colon \supp \Gamma \hookrightarrow G$ there exists
a unique homomorphism $\varphi \colon G_\Gamma \to G$ such that the following diagram is commutative:
$$\xymatrix{ \supp \Gamma \ar[r]^\varkappa \ar[rd]^\psi & G_\Gamma \ar@{-->}[d]^\varphi \\
& G
}
$$
\end{definition}

Given a set $X$, denote by $\mathcal F(X)$ the free group with the set $X$ of free generators.
It is easy to see that if $G$ is a group and $\Gamma \colon A = \bigoplus_{g\in G} A^{(g)}$ is a grading, then $$G_\Gamma \cong \mathcal F([\supp \Gamma])/N$$ where $[\supp \Gamma]:=\lbrace [g] \mid g\in \supp \Gamma \rbrace$ and $N$ is the normal closure of the words $[g][h][gh]^{-1}$ for $g,h \in \supp \Gamma$
such that $A^{(g)}A^{(h)}\ne 0$.

\begin{example}
In Example~\ref{ExampleM2gradingEquiv} the universal group of the gradings $\Gamma_1$ and $\Gamma_2$
is isomorphic to $\mathbb Z$ where $\lbrace -1, 0, 1\rbrace \subseteq \mathbb Z$ is the embedding of the support.
\end{example}

In the definition above the universal group of a grading $\Gamma$ is a pair $(G_\Gamma,\varkappa_\Gamma)$.
In~\cite[Theorem~4.3]{GordienkoSchnabel} the authors have shown that the first component of this pair can be an arbitrary finitely presented group. More precisely, for any finitely presented group $G$ there exists
$n\in\mathbb N$ and an elementary grading $\Gamma$ on the algebra of all $n\times n$ matrices  such that $G_\Gamma\cong G$.

\begin{remark} For each grading $\Gamma$ one can define a category $\mathcal C_\Gamma$
where the objects are all pairs $(G,\psi)$ such that $G$ is a group and $\Gamma$ can be realized
as a $G$-grading with $\psi \colon \supp \Gamma \hookrightarrow G$ being the embedding of the support.
In this category the set of morphisms between $(G_1,\psi_1)$ and $(G_2,\psi_2)$ consists
of all group homomorphisms $f \colon G_1 \to G_2$ such that the diagram below is commutative:
$$\xymatrix{ \supp \Gamma \ar[r]^{\psi_1} \ar[rd]^{\psi_2} & G_1 \ar[d]^f \\
& G_2
}
$$
Then $(G_\Gamma,\varkappa_\Gamma)$ is the initial object of $\mathcal C_\Gamma$.
\end{remark}

Categories and functors related to graded algebras have been studied extensively (see e.g. \cite{NastasescuVanOyst}) and several important pairs of adjoint functors have been noticed (see Section~\ref{SectionGradedAdjunctions} for a brief summary).
Each category consisted of algebras graded by a fixed group, i.e. isomorphisms in those categories coincided with isomorphisms of gradings. In order to obtain a proper categorical framework for weak equivalences, we
introduce the category, which we denote by $\mathbf{GrAlg}_F$ (or $\mathbf{GrAlg}^1_F$, if we restrict ourselves to unital algebras and unital homomorphisms), that consists of all associative algebras
graded by different groups where the morphisms are all homomorphisms of algebras that map each graded component into some graded component. Here the induced partial maps
on the grading groups come into play naturally.

In Section~\ref{SectionOplaxSupport}
 we show that the assignment of  the support to a grading leads to an oplax $2$-functor. In order to get an ordinary functor $R$ (respectively, $R_1$,
 if will deal with unital algebras) from the category of graded algebras to the category of groups that assigns
 to each grading its universal group, we restrict the sets of morphisms in the categories of graded algebras to the sets of graded homomorphisms that are injective on each graded component (Section~\ref{SectionUniversalGradingGroupFunctors}). We call such homomorphisms
 \textit{graded injective} and denote the resulting categories by $\widetilde{\mathbf{GrAlg}_F}$ and $\widetilde{\mathbf{GrAlg}^1_F}$.

In Sections~\ref{SectionLimColimGrAlg} and~\ref{SectionLimColimTildeGrAlg}
we study in detail existence of limits and colimits in $\mathbf{GrAlg}_F$, $\mathbf{GrAlg}^1_F$, $\widetilde{\mathbf{GrAlg}_F}$, and $\widetilde{\mathbf{GrAlg}^1_F}$.
We give examples of graded algebras for which (co)products and coequalizers do not exist and, in particular, we prove
\begin{Theorem A}
Let $A$ and $B$ be objects in $\mathbf{GrAlg}_F$ or $\mathbf{GrAlg}^1_F$.
A product of $A$ and $B$ in $\mathbf{GrAlg}_F$ or, respectively, $\mathbf{GrAlg}^1_F$ exists if and only if either of $A$ and $B$ is
the zero algebra or both $A$ and $B$ are algebras with a trivial grading.
\end{Theorem A}

We also calculate the product of two group algebras in $\widetilde{\mathbf{GrAlg}_F}$ and $\widetilde{\mathbf{GrAlg}^1_F}$ (Proposition~\ref{PropositionProductGroupAlgebrasTildeGrAlg}),
describe equalizers (Proposition~\ref{PropositionEqualizerGrAlg}) and monomorphisms (Propositions~\ref{PropositionCriterionForMonomorphismGrAlg} and~\ref{PropositionCriterionForMonomorphismTildeGrAlg}).

In Section~\ref{SectionAbsenceGrAdjunctions} we prove
 \begin{Theorem B}
The universal grading group functors
$$R\colon \widetilde{\mathbf{GrAlg}_F} \rightarrow \mathbf{Grp} \text{ and }
R_1\colon \widetilde{\mathbf{GrAlg}^1_F}\rightarrow \mathbf{Grp}$$
admit neither left nor right adjoints.
\end{Theorem B}

It turns out that in order to force $R$ and $R_1$ to have a left adjoint we have to restrict our consideration very much, e.g. to group algebras of groups that do not have non-trivial one dimensional representations. In that case we even get an isomorphism of categories (Proposition~\ref{PropositionEquivGroupAlgGroups}).

To sum up, even without a restriction of the sets of morphisms,
the categories $\mathbf{GrAlg}_F$ and $\mathbf{GrAlg}^1_F$ of algebras graded by all groups are quite different from the categories where the grading group is fixed.

\section{Change of the grading group and the free-forgetful adjunction}
\label{SectionGradedAdjunctions}

In order to give the reader an opportunity to compare the categories and functors
constructed and studied in the current paper with those studied in the literature earlier,
we recall two classical examples of adjunctions in the categories related to graded algebras.
The algebras below are associative, but not necessarily unital or commutative.
Our particular interest in non-unital algebras originates from the theory of polynomial
identities where the adjunction of the unit element to an algebra can completely change its polynomial
identities.

If $A$ and $B$ are objects in a category $\mathcal A$, we denote the set of morphisms $A\to B$
by $\mathcal A(A,B)$.

Let $\mathbf{Alg}_F^G$ be the category of (not necessarily unital)  associative algebras over a field $F$ graded by a group $G$. The morphisms in $\mathbf{Alg}_F^G$ are all the homomorphisms of algebras
$$\psi \colon A=\bigoplus_{g\in G}A^{(g)}\ \longrightarrow\ B=\bigoplus_{g\in G}B^{(g)}$$
such that $\psi(A^{(g)})\subseteq B^{(g)}$ for all $g\in G$.
For any group homomorphism $\varphi \colon G \to H$,
denote by $U_\varphi$  the functor $ \mathbf{Alg}_F^G \to \mathbf{Alg}_F^H$ that assigns to any $G$-grading $A=\bigoplus_{g\in G}A^{(g)}$ on an algebra $A$ the $H$-grading $$A = \bigoplus_{h\in H} A^{(h)}\text{ where }A^{(h)} := \bigoplus_{\substack{g\in G,\\ \varphi(g)=h}} A^{(g)}$$
and does not change homomorphisms.

The functor $U_\varphi  \colon \mathbf{Alg}_F^G \to \mathbf{Alg}_F^H$ admits the right adjoint functor $K_\varphi \colon \mathbf{Alg}_F^H \to \mathbf{Alg}_F^G$ defined as follows:
for an $H$-grading $B=\bigoplus_{h\in H} B^{(h)}$ we have $K_\varphi (B) = \bigoplus_{g\in G} \bigl(K_\varphi(B)\bigr)^{(g)}$
where $\bigl(K_\varphi(B)\bigr)^{(g)} := \lbrace (g, b) \mid b\in B^{\left(\varphi(g)\right)} \rbrace$.
The vector space structure on $\bigl(K_\varphi(B)\bigr)^{(g)}$ is induced from that on $B^{\left(\varphi(g)\right)}$ and
the multiplication in $K_\varphi(B)$ is defined by $(g_1,a)(g_2,b):=(g_1 g_2, ab)$ for $g_1, g_2\in G$, $a\in B^{\left(\varphi(g_1)\right)}$, $b\in B^{\left(\varphi(g_2)\right)}$. For $\psi \in \mathbf{Alg}_F^H(B_1,B_2)$,
the morphism $K_\varphi (\psi)\in \mathbf{Alg}_F^G$ is defined by $K_\varphi (\psi)(g,b):=(g,\psi(b))$ where $g\in G$, $b\in B_1^{\left(\varphi(g)\right)}$.
We have a natural bijection
$$\mathbf{Alg}_F^H (U_\varphi (A), B)\to \mathbf{Alg}_F^G (A, K_\varphi(B))$$
where $A \in \textbf{Alg}_F^G$, $B \in \textbf{Alg}_F^H$ (see e.g.~\cite[Section~1.2]{NastasescuVanOyst}).

Another example of an adjunction is the free-forgetful one. It is especially important for the theory of graded polynomial identities~\cite{AljaGia, AljaGiaLa,
BahtZaiGradedExp, GiaLa, ASGordienko9}.

Let $G$ be a group and let $\mathbf{Set}^G_*$ be the category whose objects are sets
$X$, containing a distinguished element $0$, with a fixed decomposition $X=\lbrace 0 \rbrace \sqcup \bigsqcup_{g\in G} X^{(g)}$ into a disjoint union.
Morphisms between $X=\lbrace 0 \rbrace \sqcup \bigsqcup_{g\in G} X^{(g)}$
and $Y=\lbrace 0 \rbrace \sqcup \bigsqcup_{g\in G} Y^{(g)}$ are maps $\varphi \colon X \to Y$
such that $\varphi(0)=0$ and $\varphi(X^{(g)})\subseteq \lbrace 0 \rbrace \sqcup Y^{(g)}$
for all $g\in G$. We have an obvious forgetful functor $U \colon \textbf{Alg}_F^G \to \mathbf{Set}^G_*$
that assigns to each graded algebra $A$ the object $U(A)=\lbrace 0 \rbrace \sqcup \bigsqcup_{g\in G} \left(A^{(g)} \backslash \lbrace 0 \rbrace \right)$.
This functor has a left adjoint functor $F\langle (-)\backslash \lbrace 0 \rbrace  \rangle$
that assigns to $X=\lbrace 0 \rbrace \sqcup \bigsqcup_{g\in G} X^{(g)} \in \mathbf{Set}^G_*$
the free non-unital associative algebra $F\langle X\backslash \lbrace 0 \rbrace  \rangle$ on the set $X\backslash \lbrace 0 \rbrace$ which is endowed with the grading defined by $x_1 \cdots x_n \in
F\langle X\backslash \lbrace 0 \rbrace  \rangle^{(g_1 \cdots g_n)}$
for $x_i \in X^{(g_i)}$, $1\leqslant i \leqslant n$.
Here we have a natural bijection
$$\mathbf{Alg}_F^G (F\langle X\backslash \lbrace 0 \rbrace\rangle, A)\to \mathbf{Set}^G_* (X, U(A))$$
where $A \in \textbf{Alg}_F^G$, $X \in \mathbf{Set}^G_*$.

First of all, we notice that both examples deal with the categories $\mathbf{Alg}_F^G$ where for each category the grading group is fixed, i.e. the notion of isomorphism in $\mathbf{Alg}_F^G$ coincides with the notion of graded isomorphism. Second, although the functor $U_\varphi $ changes the grading group,
 the grading on $U_\varphi(A)$ is not necessarily weakly equivalent to the grading on $A$.

In order to deal with weak equivalences of gradings, below we introduce the category $\mathbf{GrAlg}_F$ of algebras over the field $F$ graded by arbitrary groups, in which the notion of isomorphism coincides with the notion of weak equivalence of gradings.

\section{The category $\mathbf{GrAlg}_F$ and the corresponding oplax $2$-functor}
\label{SectionOplaxSupport}

We say that a homomorphism $\psi \colon A \to B$
between graded algebras $A=\bigoplus_{g \in G} A^{(g)}$
and $B=\bigoplus_{h \in H} B^{(h)}$ is \textit{graded}
if for every $g\in G$ there exists $h\in H$ such that $\psi(A^{(g)})\subseteq B^{(h)}$.

Any graded homomorphism of graded algebras induces a map between subsets of the supports of the gradings.
This gives rise to several functors which we study below.

Let $\mathbf{GrAlg}_F$ be the category where the objects are group gradings on (not necessarily unital)  associative algebras over a field $F$ and morphisms are graded homomorphisms between the corresponding algebras.

By $\mathbf{GrAlg}^1_F$ we denote the category where the objects are group gradings on unital associative algebras over a field $F$ and morphisms are graded unital homomorphisms between the corresponding algebras.

Consider also the category $\mathcal C$ where:
\begin{itemize}
\item the objects are triples $(G, S, P)$ where $G$ is a group, $S \subseteq G$ is a subset, and $P \subseteq S\times S$ is a subset such that $\lbrace gh \mid (g,h)\in P \rbrace \subseteq S$;
\item the morphisms  $(G_1, S_1, P_1)\to (G_2, S_2, P_2)$ are triples $(\psi, R, Q)$
where $R \subseteq S_1$, $Q\subseteq P_1 \cap (R\times R)$, and $\psi \colon R \to S_2$ is a map
such that $\lbrace gh \mid (g,h)\in Q\rbrace
\subseteq R$, $\psi(g)\psi(h)=\psi(gh)$ for all $(g,h)\in Q$ and $\lbrace(\psi(g),\psi(h))
\mid (g,h)\in Q \rbrace \subseteq P_2$;
\item the identity morphism for $(G, S, P)$ is $(\id_{S}, S, P)$;
\item if $(\psi_1, R_1, Q_1) \colon (G_1, S_1, P_1)\to (G_2, S_2, P_2)$
and $(\psi_2, R_2, Q_2) \colon (G_2, S_2, P_2)\to (G_3, S_3, P_3)$,
then $$(\psi,R,Q)=(\psi_2, R_2, Q_2)(\psi_1, R_1, Q_1) \colon (G_1, S_1, P_1)\to (G_3, S_3, P_3)$$
is defined by $R=\lbrace g\in R_1 \mid \psi_1(g)\in R_2 \rbrace$, $Q = \lbrace (g,h)\in Q_1 \mid (\psi_1(g),\psi_1(h))\in Q_2 \rbrace$
and $\psi(g)=\psi_2(\psi_1(g))$ for $g\in R$.
\end{itemize}

There exists an obvious functor-like map $L \colon \mathbf{GrAlg}_F \to \mathcal C$ where for $\Gamma \colon A=\bigoplus_{g\in G} A^{(g)}$ we have $$L(\Gamma):=(G, \supp \Gamma, \lbrace
(g_1,g_2)\in G \times G \mid A^{(g_1)}A^{(g_2)}\ne 0\rbrace)$$
and for a graded morphism $\varphi \colon \Gamma \to
\Gamma_1$,
where $\Gamma_1 \colon B=\bigoplus_{h\in H} B^{(h)}$,
the triple $L(\varphi)=(\psi, R, Q)$ is defined by $R=\lbrace g\in G \mid \varphi\left(A^{(g)}\right)\ne 0 \rbrace$, $$Q=\lbrace(g_1,g_2)\in R\times R\mid \varphi\left(A^{(g_1)}\right)\varphi\left(A^{(g_2)}\right)\ne 0\rbrace,$$
the map $\psi$ is defined by $\varphi\left(A^{(g)}\right) \subseteq B^{(\psi(g))}$
 for $g\in R$.

 As we will see in Example~\ref{ExampleLnotAFunctor} below,
 $L$ is not an ordinary functor: $L(\varphi_1)L(\varphi_2)$
is not necessarily equal to $L(\varphi_1 \varphi_2)$.
In order to overcome this difficulty, we endow the sets of morphisms
$\mathcal C((G_1, S_1, P_1),(G_2, S_2, P_2))$ with a partial ordering $\preccurlyeq$:
we say that $(\psi_1, R_1, Q_1) \preccurlyeq (\psi_2, R_2, Q_2)$
if $R_1 \subseteq R_2$, $Q_1 \subseteq Q_2$, and $\psi_1 = \psi_2 \bigl|_{R_1}$. For every pair $\varphi_1, \varphi_2$ of composable morphisms in $\mathcal C$ we have \begin{equation}\label{EqLSucc}L(\varphi_1)L(\varphi_2)
\succcurlyeq L(\varphi_1 \varphi_2),\end{equation}
but the inequality~(\ref{EqLSucc}) can be strict:

\begin{example}\label{ExampleLnotAFunctor}
Let $A=A^{(\bar 0)} \oplus A^{(\bar 1)}$ be a $\mathbb Z/2\mathbb Z$-graded algebra,
where $$A^{(\bar 0)}=F1_A,\quad A^{(\bar 1)}=Fa\oplus Fb,\quad a^2=ab=ba=b^2=0.$$
Let $\varphi \colon A \to A$ be the graded homomorphism defined by $\varphi(1_A)=1_A$, $\varphi(a)=b$, $\varphi(b)=0$.
Then $$L(A)=(\mathbb Z/2\mathbb Z, \mathbb Z/2\mathbb Z, (\mathbb Z/2\mathbb Z)^2 \backslash \lbrace (\bar 1, \bar 1)\rbrace),$$ $$L(\varphi)=(\id_{\mathbb Z/2\mathbb Z}, \mathbb Z/2\mathbb Z,
(\mathbb Z/2\mathbb Z)^2 \backslash \lbrace (\bar 1, \bar 1)\rbrace),$$
and $L(\varphi)^2=L(\varphi)$, however $$L(\varphi^2)=(\id_{\lbrace \bar 0 \rbrace}, \lbrace \bar 0 \rbrace, \lbrace (\bar 0, \bar 0) \rbrace)\prec L(\varphi)^2.$$
\end{example}

Now we are going to put $L$ in an appropriate categorical framework using the notion of an enriched category.
A category $\mathcal A$ is
\textit{enriched} over another category $\mathcal B$, if
 the hom-objects $\mathcal A(a,b)$ are objects of $\mathcal B$
and the composition and the assignment of the identity morphism
are morphisms in $\mathcal B$. (See the precise definition in~\cite[Section 1.2]{KellyEnriched}.)

Recall that each partially ordered set (or poset) $(M, \preccurlyeq)$ is a category where the objects are the elements $m\in M$ and if $m \preccurlyeq n$, then there exists a single morphism $m \to n$. If $m \not\preccurlyeq n$, then there is no morphism $m \to n$.
Denote by $\mathbf{Cat}$ the category of small categories. Since the notion of a $\mathbf{Cat}$-enriched category coincides with the notion of a $2$-category, every category enriched over
posets is a $2$-category.
The partial ordering $\preccurlyeq$ turns $\mathcal C$ into a category enriched over posets and, in particular, a $2$-category where
$0$-cells are triples $(G, S, P)$, $1$-cells are triples $(\psi, R, Q)$
and from $1$-cell $(\psi_1, R_1, Q_1)$ to $1$-cell $(\psi_2, R_2, Q_2)$ there exists a $2$-cell if and only if $(\psi_1, R_1, Q_1) \preccurlyeq (\psi_2, R_2, Q_2)$.

 Between $2$-categories one can consider \textit{strict} $2$-functors that preserve the composition laws and the identity $1$- and $2$-cells, as well as \textit{(op)lax} $2$-functors $F$ where equalities $F(\varphi_1)F(\varphi_2)=F(\varphi_1 \varphi_2)$
 for $1$-cells $\varphi_1,\varphi_2$ are substituted with
 an existence of a $2$-cell between the corresponding $1$-cells
 $F(\varphi_1)F(\varphi_2)$ and $F(\varphi_1 \varphi_2)$.
The difference between a lax $2$-functor and an oplax $2$-functor
is in the direction in which that $2$-cell goes.
 (See the details and the precise definition of an (op)lax 2-functor in \cite[p.~83]{KellyTwoCat} and \cite[p.~29]{BenabouBi}.)

In this terminology, the inequality~(\ref{EqLSucc})
means that there is a $2$-cell between $L(\varphi_1)L(\varphi_2)$
and $L(\varphi_1 \varphi_2)$. This turns $L$ to an oplax $2$-functor between $\mathbf{GrAlg}_F$
and $\mathcal C$ if we treat the category $\mathbf{GrAlg}_F$ as a $2$-category with discrete hom-categories (i.e. the only $2$-cells in $\mathbf{GrAlg}_F$ are the identity $2$-cells between morphisms). All the equalities in the definition of an oplax $2$-functor hold, since in the categories that are posets all diagrams commute.

Obviously, one can restrict the domain of $L$ to the category $\mathbf{GrAlg}^1_F$
and consider the same phenomenon in the case of unital graded algebras.

\section{The universal grading group functors}\label{SectionUniversalGradingGroupFunctors}

In the previous section we showed that the functor-like map $L$ that assigns to each grading its
support is not an ordinary functor, but a $2$-functor. In the current section we would like
to introduce a functor that is similar to $L$, but is an ordinary functor.
In order to do so, we restrict the sets of possible morphisms. In addition, instead of considering the support of the grading, we assign to a grading the group generated by the support and the corresponding relations, namely, the universal grading group.

We call a graded homomorphism \textit{graded injective} if its restriction to each homogeneous component
is an injective map. For example, any unital graded homomorphism of a group algebra to any unital graded algebra is graded injective since all group elements are mapped to invertible elements.

Consider the category $\widetilde{\mathbf{GrAlg}_F}$ where the objects are group gradings on (not necessarily unital)  associative algebras over a field $F$ and morphisms are  graded injective homomorphisms between the corresponding algebras. We have an obvious functor $R \colon \widetilde{\mathbf{GrAlg}_F} \to \mathbf{Grp}$
where for $\Gamma \colon A=\bigoplus_{g\in G} A^{(g)}$ we have $R(\Gamma):=G_{\Gamma}$
and for a graded injective morphism $\varphi \colon
\Gamma \to \Gamma_1$, where $\Gamma_1 \colon B=\bigoplus_{h\in H} B^{(h)}$,
the group homomorphism $R(\varphi) \colon G_{\Gamma} \to G_{\Gamma_1}$ is defined by
$R(\varphi)(g):=h$ where $\varphi\left(A^{(g)}\right) \subseteq B^{(h)}$, $g\in G$.

One can also restrict his consideration to unital algebras.
Denote by $\widetilde{\mathbf{GrAlg}_F^1}$  the category where the objects are group gradings on unital  associative algebras over a field $F$ and morphisms are unital graded injective homomorphisms between the corresponding algebras.
Denote by $R_1$ the restriction of the functor $R$ to the category $\widetilde{\mathbf{GrAlg}_F^1}$. We call $R$ and $R_1$ the \textit{universal grading group functors}.

When it is clear, with which grading an algebra $A$ is endowed, we identify $A$ with the grading on it and treat $A$ as an object of the corresponding category.

In the sections below we study the categories $\mathbf{GrAlg}_F$,
$\mathbf{GrAlg}^1_F$, $\widetilde{\mathbf{GrAlg}_F}$,
and $\widetilde{\mathbf{GrAlg}^1_F}$ as well as the functors $R$ and $R_1$ introduced above.

\section{Categorical constructions in $\mathbf{GrAlg}_F$
and $\mathbf{GrAlg}^1_F$}\label{SectionLimColimGrAlg}

In this section we investigate the existence of (co)products and (co)equalizers in
$\mathbf{GrAlg}_F$ and in $\mathbf{GrAlg}^1_F$.
Our particular interest to these specific examples of (co)limits
originates from the well-known fact that if
in a category there exist (co)products and (co)equalizers,
then there exist all (co)limits. (See e.g. \cite[Chapter~V, Section 2, Theorem~2]{MacLaneCatWork}.)
In addition, in Proposition~\ref{PropositionCriterionForMonomorphismGrAlg}
below we describe monomorphisms and compare them with injective and graded injective homomorphisms.

It is easy to see that the product of a graded algebra $A$ with the zero algebra
is isomorphic to $A$. Also, if two algebras $A$ and $B$ both have the trivial grading, then their product
as non-graded algebras (again with the trivial grading) will be still their product in $\mathbf{GrAlg}_F$ and, if $A$ and $B$ are unital, in $\mathbf{GrAlg}^1_F$.
Theorem~\ref{TheoremAbsensceOfProductsGrAlg} below shows that in any other case, a product of two objects in $\mathbf{GrAlg}_F$ and in $\mathbf{GrAlg}^1_F$ does not exists.
As a consequence, we get Theorem~A.

\begin{theorem}\label{TheoremAbsensceOfProductsGrAlg}
Let $\Gamma_1 \colon A = \bigoplus_{g\in G} A^{(g)}$ and $\Gamma_2 \colon B = \bigoplus_{h\in H} B^{(h)}$ be group gradings on algebras over a field $F$ such that
$\supp \Gamma_1$ consist of at least two distinct
elements and $B\ne 0$. Then the product of $A$ and $B$ exists neither
in $\mathbf{GrAlg}_F$ nor in $\mathbf{GrAlg}^1_F$.
\end{theorem}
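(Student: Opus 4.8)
The plan is to argue by contradiction: suppose a product $(C,\pi_A,\pi_B)$ of $A$ and $B$ exists in $\mathbf{GrAlg}_F$ (the unital case is handled in parallel, using the unital graded algebra axioms where needed). The key idea is to exploit the universal property against a carefully chosen family of test objects built out of the one-dimensional algebra $F$ with various gradings. Pick two distinct elements $g_1, g_2 \in \supp \Gamma_1$ and nonzero homogeneous elements $a_i \in A^{(g_i)}$. For each nonzero $b \in B$, choose a homogeneous component $B^{(h)}$ with $b$ having nonzero projection there, or more cleanly just work with a fixed nonzero homogeneous $b_0 \in B^{(h_0)}$ (which exists since $B \neq 0$). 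Now consider the algebra $F$ with the trivial grading concentrated in the identity of some group; a graded homomorphism $F \to A$ (in the non-unital setting, an algebra homomorphism, possibly zero) is determined by where it sends $1$, and the image must be a homogeneous idempotent or zero. The point is to produce two different morphisms $F \to C$ that have the same composites with $\pi_A$ and $\pi_B$, or to show the induced map on hom-sets fails to be a bijection.

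The cleaner route, which I would actually carry out, is this: a product in $\mathbf{GrAlg}_F$ must in particular be a product in the category of $F$-algebras after forgetting gradings, OR at least the universal property forces $C$ to receive, for every object $T$, a bijection $\mathbf{GrAlg}_F(T,C) \cong \mathbf{GrAlg}_F(T,A) \times \mathbf{GrAlg}_F(T,B)$. First I would take $T = A$ itself with the morphism $\id_A$ on the left and the zero morphism $A \to B$ on the right (the zero map is graded: it sends everything into $B^{(h)}$ for any fixed $h$, or we use that $0 \in B^{(h)}$ for all $h$); this yields a section-like morphism $s_A \colon A \to C$ with $\pi_A s_A = \id_A$, $\pi_B s_A = 0$, and similarly $s_B \colon B \to C$. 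Then I would look at the homogeneous components: since $\supp \Gamma_1$ has at least two elements, the images $s_A(A^{(g_1)})$ and $s_A(A^{(g_2)})$ lie in distinct homogeneous components $C^{(c_1)}$, $C^{(c_2)}$ of $C$ (distinct because $\pi_A$ is graded and separates them back in $A$), and $s_B(b_0)$ lies in some $C^{(c_0)}$. The contradiction will come from multiplication: in $C$ we can form products of homogeneous elements coming from the $A$-side and the $B$-side, but $\pi_A$ must kill the $B$-side contributions and $\pi_B$ must kill the $A$-side ones, which forces all such mixed products to vanish; yet by varying the test object $T$ one can exhibit a graded algebra mapping to $A$ and to $B$ whose structure is incompatible with $C$ having only these vanishing relations, because the grading group of $C$ would have to simultaneously accommodate the group generated by $c_1, c_2$ (reflecting nontrivial structure of $\Gamma_1$'s support) and be "small enough" — precisely, one builds $T$ so that $\mathbf{GrAlg}_F(T,C)$ is forced to be larger or smaller than the product $\mathbf{GrAlg}_F(T,A)\times\mathbf{GrAlg}_F(T,B)$.

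Concretely, for the decisive step I expect to use the following test object: let $T$ be a two-dimensional algebra $F u \oplus F v$ with $uv = vu = u^2 = v^2 = 0$, graded by a free group on two generators (or by $\mathbb Z^2$) with $u, v$ in the two generator-components. A graded homomorphism $T \to A$ picks out a pair of homogeneous elements of $A$ with pairwise-zero products landing in prescribed-but-free degrees, and likewise for $B$; the freedom in choosing the grading degrees is what breaks the product. I would show that the set of pairs realized by $(T \to A,\ T \to B)$ cannot be matched bijectively by $\{T \to C\}$ for any single graded algebra $C$, because a morphism $T \to C$ forces a single pair of degrees in $G_C$, whereas the right-hand side allows the $A$-degree pair and the $B$-degree pair to be chosen independently from genuinely different groups. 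This independence is available precisely because $\supp\Gamma_1$ has $\geq 2$ elements (so $T$ can map nontrivially to $A$ in more than one essentially different way) and $B \neq 0$.

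The main obstacle, and the part requiring genuine care, is making the incompatibility argument airtight in the non-unital setting, where the zero morphism is always available and where homogeneous pieces need not contain idempotents — so one cannot simply quote facts about unital homomorphisms out of group algebras. I would handle this by being explicit about which finite-dimensional graded test algebras to use (the $Fu \oplus Fv$-type examples above, plus possibly $F$ with a trivial grading to pin down degrees), and by tracking the induced partial maps on supports via the map $L$ of Section~\ref{SectionOplaxSupport}, which records exactly the combinatorial data (support, multiplication relations) that a putative product $C$ would have to reconcile. The unital case is then a simplification, since there we additionally know morphisms preserve the identity, eliminating the zero-morphism subtleties.
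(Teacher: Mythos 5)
There is a genuine gap in the decisive step. Your plan is to exhibit a test object $T$ for which the map $\mathbf{GrAlg}_F(T,C)\to\mathbf{GrAlg}_F(T,A)\times\mathbf{GrAlg}_F(T,B)$ cannot be a bijection, and your proposed mechanism is that a morphism $T\to C$ ``forces a single pair of degrees in $G_C$, whereas the right-hand side allows the $A$-degree pair and the $B$-degree pair to be chosen independently.'' That is not an obstruction: the grading group of the putative product $C$ is not fixed in advance, and nothing prevents it from being large enough that different pairs $(f\colon T\to A,\ g\colon T\to B)$ are matched by morphisms $T\to C$ landing in different homogeneous components of $C$. (Compare Proposition~\ref{PropositionProductGroupAlgebrasTildeGrAlg}, where exactly such a ``large'' product $F(F^{\times}\times G\times H)$ does exist in the restricted category.) Moreover, your test algebra $Fu\oplus Fv$ places $u$ and $v$ in \emph{different} homogeneous components, so a graded morphism out of it imposes no relation between the degrees of the images of $u$ and $v$; no clash can be extracted from it. The idea you are missing is a \emph{linkage} argument: take a test algebra with two generators $x,y$ in the \emph{same} homogeneous component (the paper uses the free algebra $D=F\langle x,y\rangle$ with the total-degree $\mathbb Z$-grading), build two morphisms $\psi_1,\psi_2\colon D\to C$ over the pairs $(\alpha_1,\beta)$ and $(\alpha_2,\beta)$ with $\alpha_1(x)=\alpha_2(x)=0$, $\alpha_i(y)=a^{(g_i)}$, $\beta(x)=\beta(y)=b^{(h)}\ne 0$, and then invoke the \emph{uniqueness} clause of the universal property on the subalgebra generated by $x$ to force $\psi_1(x)=\psi_2(x)\ne 0$. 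Since $x$ and $y$ are in the same component of $D$ and all images are nonzero, $\psi_1(y)$ and $\psi_2(y)$ must lie in the same component of $C$, hence $a^{(g_1)}$ and $a^{(g_2)}$ lie in the same component of $A$ --- the actual contradiction. Your sketch never exploits uniqueness and never ties the two maps to a common nonzero element, so it does not close.

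Two further problems. First, the section morphisms $s_A,s_B$ you construct rely on the zero morphism $A\to B$, which is not a morphism of $\mathbf{GrAlg}^1_F$ (it is not unital), and your test object $Fu\oplus Fv$ is not unital either; so, contrary to your closing remark, the unital case is not a ``simplification'' of your argument --- your test objects are simply unavailable there. The paper avoids this by using the free unital/non-unital algebra on $x,y$ and the polynomial algebra with/without constant term, which lets both cases run in parallel. Second, the preliminary claim that a product in $\mathbf{GrAlg}_F$ ``must in particular be a product in the category of $F$-algebras after forgetting gradings'' is unjustified (the forgetful functor need not preserve products); you hedge it with an ``or,'' but it should be dropped.
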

\begin{proof}
We will prove the both cases $\mathbf{GrAlg}_F$ and
$\mathbf{GrAlg}^1_F$ simultaneously. In the latter case $A$ and $B$ are assumed to be unital.

Choose $g,t\in G$, $h\in H$, $a^{(g)} \in A^{(g)}$, $a^{(t)} \in A^{(t)}$,
$b^{(h)} \in B^{(h)}$ such that $g\ne t$, $a^{(g)}\ne 0$, $a^{(t)}\ne 0$,
$b^{(h)}\ne 0$.

Suppose the product $A \times B$ exists. Let $\pi \colon A\times B \to A$
and $\rho \colon A\times B \to B$ be the corresponding morphisms (``projections'').

Let $D$ be the free associative non-commutative algebra (unital if we work in $\mathbf{GrAlg}^1_F$ and non-unital if we work in $\mathbf{GrAlg}_F$) over a field $F$ in the variables $x, y$ with the $\mathbb Z$-grading by the total degree of a monomial. The definition of a product in a category implies that for every morphisms $\alpha \colon D \to A$ and $\beta \colon D \to B$
there exists a unique morphism $\psi \colon D \to A\times B$
such that the diagram below is commutative:

$$\xymatrix{ & A \times B
\ar[ld]_{\pi} \ar[rd]^{\rho} & \\
A & & B \\
& D \ar[lu]_{\alpha} \ar[ru]^{\beta} \ar@{-->}[uu]^{\psi} &
}
$$

Define graded homomorphisms $\alpha_1, \alpha_2 \colon D \to A$
and $\beta \colon D \to B$ by $\alpha_1(x)=\alpha_2(x)=0$, $\alpha_1(y)=a^{(g)}$,
 $\alpha_2(y)=a^{(t)}$, $\beta(x)=\beta(y)=b^{(h)}$.
 Now let $\psi_1, \psi_2 \colon D \to A\times B$ be the unique graded homomorphisms
 such that $\pi \psi_i = \alpha_i$ and $\rho\psi_i = \beta$.

Denote by $C$ the algebra of all polynomials
in the variable $x$ with coefficients in $F$ without a constant term if we work in $\mathbf{GrAlg}_F$ and with a constant term if we work in $\mathbf{GrAlg}^1_F$. We endow $C$ with the degree $\mathbb Z$-grading. Then each morphism $\alpha \colon C \to A$ or $\beta \colon C \to B$
is determined uniquely by the choice of a homogeneous element $\alpha(x)$ or $\beta(x)$.
Let $\tau \colon C \hookrightarrow D$ be the natural embedding where $\tau(x)=x$.
Since $\pi\psi_1\tau(x)=\alpha_1(x)=\alpha_2(x)=\pi\psi_2\tau(x)$
and $\rho\psi_1\tau(x)=\beta(x)=\rho\psi_2\tau(x)$, the universal property of the product
implies $\psi_1\tau = \psi_2\tau$ and $\psi_1(x)=\psi_2(x)$. At the same time,
as $\rho(\psi_1(x))=b^{(h)}\ne 0$, we have $\psi_1(x)=\psi_2(x)\ne 0$.
The same argument implies $\psi_1(y)\ne 0$ and $\psi_2(y)\ne 0$.
Since $x$ and $y$ belong to the same homogeneous component of $D$,
$\psi_1(x)$ and $\psi_1(y)$ belong to the same homogeneous component of $A\times B$.
Analogously, $\psi_2(x)$ and $\psi_2(y)$ belong to the same homogeneous component of $A\times B$. As all these elements are nonzero
and $\psi_1(x)=\psi_2(x)$, we obtain that $\psi_1(y)$ and $\psi_2(y)$ belong to the same homogeneous component of $A\times B$. In particular, $a^{(g)}=\pi(\psi_1(y))$
and $a^{(t)}=\pi(\psi_2(y))$ belong to the same  homogeneous component of $A$.
Thus $g=t$ and we get a contradiction.
Therefore, $A\times B$ exists neither
in $\mathbf{GrAlg}_F$ nor in $\mathbf{GrAlg}^1_F$.
\end{proof}

Now we give an example of graded algebras for which the coproduct does not exist.

\begin{proposition}
Let $A$ be an associative algebra over a field $F$ with the basis $1_A, a, b, c, d, cd$
where all the products of $a,b,c,d$ are zero except $cd=dc\ne 0$.
Let $B=\langle 1_B, v\rangle_F$, $v^2=0$. Denote by $\xi$ a generator of the cyclic group $C_3$
of order $3$. Consider the $C_3$-grading on $A$
defined by $A^{(1)}=\langle 1_A, cd\rangle_F$, $A^{(\xi)}=\langle a,c\rangle_F$,
$A^{\left(\xi^2\right)}=\langle b,d\rangle_F$ and the trivial grading on $B$.
Then the coproduct of $A$ and $B$ exists neither
in $\mathbf{GrAlg}_F$ nor in $\mathbf{GrAlg}^1_F$.
\end{proposition}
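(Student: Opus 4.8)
The plan is to assume that a coproduct of $A$ and $B$ exists, call it $P$ with canonical morphisms $\iota_A\colon A\to P$, $\iota_B\colon B\to P$ and grading $P=\bigoplus_{t\in T}P^{(t)}$ (with $e$ the identity of $T$), to pin down part of this grading, and then to force a contradiction using one well-chosen morphism out of $P$.

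First I would record structure of $P$. The universal property applied to the pair $(\id_A,g_0)$, where $g_0\colon B\to A$ is the (graded, unital) homomorphism with $g_0(1_B)=1_A$ and $g_0(v)=0$, produces a left inverse of $\iota_A$; so $\iota_A$ is injective, and, by a symmetric argument with a homomorphism $A\to B$ killing $a,b,c,d$, so is $\iota_B$. Since $\iota_A$ is graded and $\iota_A(1_A)$ is a nonzero homogeneous idempotent — and any homogeneous idempotent of a group-graded algebra lies in the identity component, as for $u\in P^{(t)}$ with $u=u^2\neq0$ the element $u^2\in P^{(t^2)}$ forces $t=t^2=e$ — we get $\iota_A(A^{(1)})\subseteq P^{(e)}$. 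Put $\iota_A(A^{(\xi)})\subseteq P^{(k)}$ and $\iota_A(A^{(\xi^2)})\subseteq P^{(l)}$. The element $0\neq\iota_A(cd)=\iota_A(c)\iota_A(d)$ lies in $P^{(kl)}$ and, since $cd\in A^{(1)}$, in $P^{(e)}$, so $kl=e$. As $B$ is trivially graded, $\iota_B(B)$ lies in a single component, which contains the nonzero idempotent $\iota_B(1_B)$, hence $\iota_B(B)\subseteq P^{(e)}$; in particular $\iota_B(v)\in P^{(e)}$. It follows that $w:=\iota_A(c)\,\iota_B(v)\,\iota_A(d)\in P^{(k)}P^{(e)}P^{(l)}\subseteq P^{(kl)}=P^{(e)}$.

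Next I would choose a test object in which the images of $c$ and $d$ lie in components whose product is not $e$. Take $X:=F\langle\alpha,\beta,\gamma\rangle/(\alpha^2,\beta^2,\gamma^2,\alpha\beta,\beta\alpha)$, unital, $\mathbb Z$-graded by $\deg\alpha=\deg\beta=1$, $\deg\gamma=0$; a normal-form argument shows $\alpha\gamma\beta$ is a nonzero element of $X^{(2)}$. Define $f\colon A\to X$ by $f(1_A)=1_X$, $f(a)=f(b)=0$, $f(c)=\alpha$, $f(d)=\beta$; this is well defined because $f$ sends every product of two or more of $a,b,c,d$ to $0$, compatibly with all relations of $A$ (in particular $f(cd)=\alpha\beta=0=\beta\alpha=f(dc)$). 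Define $g\colon B\to X$ by $g(1_B)=1_X$, $g(v)=\gamma$. Both $f$ and $g$ are graded unital homomorphisms: $f$ maps $A^{(1)}$ into $X^{(0)}$ and $A^{(\xi)},A^{(\xi^2)}$ into $X^{(1)}$, and $g$ maps $B$ into $X^{(0)}$. Let $h\colon P\to X$ be the induced morphism, $h\iota_A=f$, $h\iota_B=g$. Then $h(w)=f(c)g(v)f(d)=\alpha\gamma\beta$ is nonzero and lies in $X^{(2)}$; but $h$ is graded, so $h(P^{(e)})$ lies in one component of $X$, which must be $X^{(0)}$ since it contains $h(\iota_A(1_A))=1_X\neq0$; as $w\in P^{(e)}$ this gives $h(w)\in X^{(0)}$, contradicting $X^{(0)}\cap X^{(2)}=0$. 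Hence no coproduct of $A$ and $B$ exists in $\mathbf{GrAlg}_F$ or $\mathbf{GrAlg}^1_F$.

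The heart of the argument is the choice of $X$ together with the observation that the coproduct is forced to keep $\iota_A(c)$ and $\iota_A(d)$ in mutually inverse components of $P$ (because $cd\neq0$ in $A$ makes $\iota_A(cd)\neq0$), so $w$ has identity degree in $P$, whereas a homomorphism out of $A$ may freely collapse $cd$ to $0$ and thereby place $c$ and $d$ in components whose product is not $e$ while still having $f(c)g(v)f(d)\neq0$ — exactly the incompatibility that obstructs a coproduct. The remaining steps are routine: checking that $g_0$, $f$, $g$ are honest (graded, and when needed unital) homomorphisms, that the $\mathbb Z$-grading on $X$ is well defined and $\alpha\gamma\beta\neq0$, and that none of the analysis of $P$ used unitality of $P$, so the one argument settles both $\mathbf{GrAlg}_F$ and $\mathbf{GrAlg}^1_F$.
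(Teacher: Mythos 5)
Your proof is correct, and while it follows the same broad strategy as the paper (assume the coproduct $P$ exists, pin down degree constraints forced by $cd=dc\neq 0$, then map $P$ to a test algebra whose grading violates them), the mechanism is genuinely different. The paper only extracts \emph{commutativity}: from $i(c)i(d)=i(d)i(c)\neq 0$ it gets $gh=hg$ for the degrees of $i(c),i(d)$, and from $j(1_B)j(v)=j(v)\neq 0$ that $j(v)$ has trivial degree; the contradiction then comes from a test algebra graded by the free group $\mathcal F(X,Z)$, in which the images $xyz$ and $zyx$ of $i(a)j(v)i(b)$ and $i(b)j(v)i(a)$ (two elements forced into the \emph{same} component of $P$) land in the distinct components $C^{(XZ)}$ and $C^{(ZX)}$. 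You instead use the homogeneous-idempotent observation to place $\iota_A(A^{(1)})$ and $\iota_B(B)$ in the identity component, which upgrades the constraint to $kl=e$ for the degrees $k,l$ of $\iota_A(c),\iota_A(d)$; this stronger relation lets you get the contradiction from a single witness $\iota_A(c)\iota_B(v)\iota_A(d)\in P^{(e)}$ mapping to a nonzero element of degree $2$ in a $\mathbb Z$-graded test algebra. What your route buys: the test object can be graded by an abelian group, the elements $a,b$ of $A$ play no role beyond being killed (so your argument in fact applies to the smaller algebra $\langle 1_A,c,d,cd\rangle_F$), and only one nonzero product needs to be tracked; what the paper's route buys is that it never needs to identify the identity component of $P$, only a commutation relation. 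All the verifications you flag as routine (well-definedness of $f$ on the relations of $A$, homogeneity of the ideal defining $X$, and the normal-form argument showing $\alpha\gamma\beta\neq 0$) do check out, and every auxiliary morphism you use is unital, so the argument indeed covers both $\mathbf{GrAlg}_F$ and $\mathbf{GrAlg}^1_F$.
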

\begin{proof}
Suppose there exists a coproduct which we denote by $A\sqcup B$.
Denote by $i\colon A \to A\sqcup B$ and $j\colon B \to A\sqcup B$
the corresponding morphisms.

Let $f\colon B \to A$ be the unital homomorphism defined by $f(v)=0$.
Then there exists a unique morphism $g\colon A \sqcup B \to A$ such that the diagram below is commutative:

$$\xymatrix{ & A \sqcup B
\ar@{-->}[dd]^h & \\
A \ar[ru]^{i}\ar@{=}[rd]^{\id_A} & & B \ar[lu]_{j}\ar[ld]_{f}
 \\
& A   &
}
$$

Since $cd=h(i(cd))\ne 0$, we have \begin{equation}\label{EqIcIdNe0}
i(c)i(d)=i(d)i(c)\ne 0.
\end{equation}

Consider the associative algebra $C$ with the basis $\lbrace 1_C, x, y, z, xy, yz, zy, yx, xyz, zyx\rbrace$
where all the other products of $x,y,z$ are zero.
Define a grading on $C$ by the free group $\mathcal F (X,Z)$ by $x\in C^{(X)}$; $1_C,y\in C^{(1)}$; $z \in C^{(Z)}$. Define unital graded homomorphisms $\alpha \colon A \to C$
and $\beta \colon B \to C$ by
$$\alpha(a)=x,\text{ } \alpha(b)=z,\text{ } \alpha(c)=0, \text{ } \alpha(d)=0, \text{ }\beta(v)=y.$$

Then there exists a unique morphism $\psi \colon A\sqcup B \to C$ such that the diagram below is commutative:

$$\xymatrix{ & A \sqcup B
\ar@{-->}[dd]^\psi & \\
A \ar[ru]^{i}\ar[rd]^{\alpha} & & B \ar[lu]_{j}\ar[ld]_{\beta}
 \\
& C    &
}
$$

Suppose that $A \sqcup B$ is graded by a group $G$. Since $i$ and $j$ map homogeneous elements to homogeneous elements, there exist $g,h, t \in G$ such that
$$i(a)\in (A \sqcup B)^{(g)};\text{ } i(b)\in (A \sqcup B)^{(h)}; \text{ } j(1_B), j(v)\in (A \sqcup B)^{(t)}.$$ Since
$a,c \in A^{(\xi)}$ and $b,d \in A^{\left(\xi^2\right)}$,
we have $i(c)\in (A \sqcup B)^{(g)}$ and
$i(d)\in (A \sqcup B)^{(h)}$.
Now (\ref{EqIcIdNe0}) implies $gh=hg$.
At the same time, since $\psi(j(v))=\beta(v)=y\ne 0$,
we have $j(v) \ne 0$
and $j(1_B)j(v)=j(v)$. Therefore $t^2=t$ and consequently $t=1_G$. Hence both $i(a)j(v)i(b)$
and $i(b)j(v)i(a)$ belong to $(A \sqcup B)^{(s)}$ where $s=g t h = gh = hg = htg$.
However $\psi(i(a)j(v)i(b))=xyz$ and $\psi(i(b)j(v)i(a))=zyx$ belong to different homogeneous components of $C$.
We get a contradiction. Hence $A \sqcup B$ does not exist.
\end{proof}

Below we calculate the equalizers.

\begin{proposition}\label{PropositionEqualizerGrAlg}
Let $\alpha, \beta \colon A \to B$ be homomorphisms of graded algebras.
Denote by $C$ the linear span of all homogeneous elements $a\in A$
such that $\alpha(a)=\beta(a)$. Let $i \colon C \to A$ be the corresponding
embedding. Then $i$ is an equalizer of $\alpha$ and $\beta$ both in
$\mathbf{GrAlg}_F$ and in $\mathbf{GrAlg}^1_F$.
(In the case of $\mathbf{GrAlg}^1_F$ we require that both $\alpha$ and $\beta$ are unital.)
\end{proposition}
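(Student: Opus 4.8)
The plan is to verify directly that the embedding $i \colon C \hookrightarrow A$ satisfies the universal property of an equalizer in the relevant category. There are three things to check: that $C$ is an object of the category (a graded subalgebra, unital if we are in $\mathbf{GrAlg}^1_F$), that $i$ is a morphism equalizing $\alpha$ and $\beta$, and that every morphism $f \colon X \to A$ with $\alpha f = \beta f$ factors uniquely through $i$.

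First I would check that $C$ is a graded subalgebra of $A$. By construction $C$ is spanned by homogeneous elements, so it is a graded subspace: $C = \bigoplus_{g\in G}(C\cap A^{(g)})$. For the subalgebra property, take homogeneous $a\in A^{(g)}$ and $a'\in A^{(g')}$ with $\alpha(a)=\beta(a)$ and $\alpha(a')=\beta(a')$; then $aa'\in A^{(gg')}$ is homogeneous and $\alpha(aa')=\alpha(a)\alpha(a')=\beta(a)\beta(a')=\beta(aa')$, so $aa'\in C$. One subtle point: $C$ is spanned by such products together with the generating homogeneous elements, but since $\alpha,\beta$ are linear the set of homogeneous elements on which they agree, within a fixed component $A^{(g)}$, is already a subspace; so $C\cap A^{(g)}=\{a\in A^{(g)}\mid \alpha(a)=\beta(a)\}$ and closure under multiplication follows as above. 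In the unital case, $1_A\in A^{(e)}$ is homogeneous and $\alpha(1_A)=1_B=\beta(1_A)$ since $\alpha,\beta$ are unital, so $1_A\in C$ and $C$ is a unital subalgebra. The inclusion $i$ is clearly a graded homomorphism (it maps $C\cap A^{(g)}$ into $A^{(g)}$), unital when required, and $\alpha i = \beta i$ by the very definition of $C$.

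Next I would verify the universal property. Suppose $f\colon X\to A$ is a morphism in the category (so $X=\bigoplus_{k\in K}X^{(k)}$ is graded, $f$ graded, unital if applicable) with $\alpha f = \beta f$. For each $k$, $f(X^{(k)})\subseteq A^{(g)}$ for some $g$, and for every homogeneous $x\in X^{(k)}$ we have $\alpha(f(x))=\beta(f(x))$, so $f(x)\in C$. Hence $f(X^{(k)})\subseteq C\cap A^{(g)}$, and since $X$ is spanned by its homogeneous elements, $f(X)\subseteq C$. Thus $f$ corestricts to a map $\bar f\colon X\to C$ with $i\bar f = f$; this $\bar f$ is a homomorphism (since $i$ is a mono of algebras, or just directly), is graded (it sends $X^{(k)}$ into $C\cap A^{(g)}$), and is unital in the unital case. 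Uniqueness is immediate: if $i h = f$ then $h(x)=f(x)$ for all $x$ since $i$ is injective, so $h=\bar f$. This establishes that $i$ is an equalizer in both $\mathbf{GrAlg}_F$ and $\mathbf{GrAlg}^1_F$.

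I do not expect a serious obstacle here; the argument is a routine "subobject carved out by an equation" verification, and the only things to be careful about are (a) that one genuinely works with \emph{homogeneous} elements throughout so that $C$ stays graded, (b) that $f$ being a graded homomorphism is precisely what lets one conclude $f(X)\subseteq C$ rather than merely into the linear span of $C$ in $A$, and (c) the bookkeeping about unitality in the $\mathbf{GrAlg}^1_F$ case. The mild point worth stating explicitly in the write-up is why $C\cap A^{(g)}$ equals $\{a\in A^{(g)}\mid\alpha(a)=\beta(a)\}$: this holds because $\alpha$ and $\beta$ are linear, so the locus of agreement inside each component is a subspace, and taking the linear span of all homogeneous agreement-elements does not enlarge any individual component.
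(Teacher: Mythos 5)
Your proof is correct and follows essentially the same route as the paper's: observe that $C$ is a graded subalgebra, note that any graded $f$ with $\alpha f=\beta f$ sends homogeneous elements into $C$ and hence factors through the inclusion by corestriction, with uniqueness from injectivity of $i$. You simply spell out the details (componentwise description of $C$, closure under products, unitality) that the paper dismisses as obvious.
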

\begin{proof}
 It is obvious that $C$ is a graded subalgebra of $A$.
We have to show that for every homomorphism $\gamma \colon D \to A$
of graded algebras such that $\alpha\gamma=\beta\gamma$ there exists a unique graded homomorphism
$\varphi \colon D \to C$ such that $\gamma = i\varphi$:

$\xymatrix{ C \ar[r]^i  & A \ar@<-0.5ex>[r]_\beta \ar@<0.5ex>[r]^\alpha & B \\
                       & D \ar[u]_\gamma \ar@{-->}[lu]^\varphi}$

However this follows from the fact that for every homogeneous $d\in D$
we have $\alpha(\gamma(d))=\beta(\gamma(d))$
which implies $\gamma(d)\in C$. Since $D$ is the linear span of its homogeneous components, $\varphi(D)\subseteq C$
and $\varphi$ is just the restriction of the codomain for $\gamma$.
\end{proof}

In the proposition below we give an example of a parallel pair of graded homomorphisms
having no coequalizer. Recall that in our notation $\mathcal F (x,y)$ is the free group of rank two generated by the symbols $x,y$.

\begin{proposition}
Let $G=\mathcal F (x,y)$ and let $B$ be the algebra over a field $F$ with the basis $1, a,b,c,d$
where the multiplication is defined by $\langle a,b,c,d \rangle_F^2=0$. Define a $G$-grading on $B$ as follows:
$$B^{(1)}=F1_B,\text{ } B^{(x)}=\langle a,c \rangle_F, \text{ }B^{(y)}=\langle b,d \rangle_F.$$
Let $A:=\langle 1, a, b\rangle_F \subset B$.
Denote by $\alpha \colon A \to B$ the natural embedding
and by $\beta \colon A \to B$ the unital homomorphism defined by $\beta(a)=b$
and $\beta(b)=a$.
Then the coequalizer of $\alpha$ and $\beta$ exists neither
in $\mathbf{GrAlg}_F$ nor in $\mathbf{GrAlg}^1_F$.
\end{proposition}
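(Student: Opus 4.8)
The plan is to argue by contradiction, following the same template as the preceding two non-existence proofs. Suppose a coequalizer $q \colon B \to Q$ of $\alpha$ and $\beta$ exists in $\mathbf{GrAlg}_F$ (or $\mathbf{GrAlg}^1_F$), and say $Q$ is graded by a group $K$. The defining identity $q\alpha = q\beta$ forces $q(a) = q(b)$ in $Q$, so both of these elements lie in the same homogeneous component $Q^{(k)}$ for some $k \in K$; also $q(c)$ and $q(d)$ are homogeneous. The key structural point is that in $B$ we have $a, c \in B^{(x)}$ and $b, d \in B^{(y)}$, so $c$ and $b$ share a homogeneous component under $q$: writing $q(b) = q(a) = \xi \in Q^{(k)}$ and $q(c) = \zeta$, the elements $q(a), q(c)$ must land in a common component (they are both in $B^{(x)}$), hence $\zeta \in Q^{(k)}$ as well, and similarly $q(d)$ lies in the component of $q(b)$, i.e. again $Q^{(k)}$. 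So $q$ collapses the entire four-element spanning set $\{a,b,c,d\}$ into a single homogeneous component $Q^{(k)}$, with $q(a) = q(b)$.

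Next I would manufacture test morphisms out of $B$ that separate the data which $Q$ has been forced to identify, exactly as in the coproduct argument above with the algebra $C$ on generators $x,y,z$. Concretely, let $C$ be a suitable graded algebra — graded by a free group — carrying two homogeneous elements $u, w$ with $u$ and $w$ in \emph{different} components but both products $uw$ and $wu$ nonzero and in distinct components, and define a graded homomorphism $\gamma \colon B \to C$ by sending $a \mapsto 0$, $b \mapsto 0$, $c \mapsto u$, $d \mapsto w$ (this respects $\langle a,b,c,d\rangle_F^2 = 0$ since $uw, wu$ need not vanish — so one instead takes $c \mapsto u$, $d \mapsto w$ with $uw = wu = 0$ in $C$ but $u,w$ in different components). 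Then $\gamma \alpha = \gamma \beta$ trivially, because both composites kill $a$ and $b$; so by the universal property of the coequalizer there is a unique graded $\bar\gamma \colon Q \to C$ with $\bar\gamma q = \gamma$. But then $\bar\gamma$ maps the single component $Q^{(k)}$ (which contains the images of $c$ and $d$) into a single component of $C$, forcing $u = \bar\gamma(q(c))$ and $w = \bar\gamma(q(d))$ to lie in the same component of $C$ — contradicting the choice of $C$. Hence no coequalizer exists.

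The main obstacle is choosing $C$ and $\gamma$ so that (i) $\gamma$ is genuinely a morphism in the relevant category — graded, and unital in the $\mathbf{GrAlg}^1_F$ case — and in particular respects the relation $\langle a,b,c,d\rangle_F^2 = 0$, which means whatever $c$ and $d$ are sent to must have vanishing pairwise products in $C$; and (ii) simultaneously $c$ and $d$ are forced into \emph{different} homogeneous components, so that collapsing them into one component of $Q$ becomes detectable. The tension is that to see the obstruction one wants $c,d$ in different components, but the nilpotence relation in $B$ prevents using their products directly. I expect the resolution is to use \emph{two} test maps, or a single test map together with a second morphism separating $a$ from $b$: note $\alpha(a) = a \ne b = \beta(a)$, so $a$ and $b$ themselves are sent to different components of $B$ by $\alpha$ versus $\beta$, yet $q$ must identify $q(a)$ with $q(b)$; comparing this with a map $B \to B'$ into a graded algebra where the images of $a$ and $b$ can be distinguished (e.g. again some free-group-graded algebra with $a \mapsto$ one component, $b \mapsto$ another, $c \mapsto 0$, $d \mapsto 0$) and applying the universal property should yield the contradiction more cleanly. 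Working out the precise $C$ and verifying the grading and unitality compatibilities is the routine-but-delicate part; the conceptual content is that $q$ is forced to be too degenerate (collapsing $x$- and $y$-components together) to be compatible with all maps out of $B$.
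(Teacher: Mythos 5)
Your main test map (kill $a,b$, send $c,d$ to nonzero homogeneous elements lying in different components and having vanishing products) is essentially the paper's $\theta\colon B\to A$, $\theta(a)=\theta(b)=0$, $\theta(c)=a$, $\theta(d)=b$; you do not even need to build a new algebra $C$, since $A$ itself serves. But there is a genuine gap in how you deploy it. Your ``collapse'' step --- that $q(a),q(b),q(c),q(d)$ all land in one component $Q^{(k)}$ --- silently assumes $q(a)=q(b)\neq 0$: if $q(a)=0$ then it lies in every component, so the component receiving $q(B^{(x)})$ (which contains $q(c)$) and the component receiving $q(B^{(y)})$ (which contains $q(d)$) need not coincide, and your test map produces no contradiction. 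Run in the correct order, the test map shows that $q(c)$ and $q(d)$ are nonzero and lie in \emph{different} components, hence $q(B^{(x)})$ and $q(B^{(y)})$ go to different components, hence $q(a)=q(b)$ forces $q(a)=0$; a contradiction still has to come from somewhere else.

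The missing ingredient is a second test map certifying $q(a)\neq 0$, and the fix you propose --- a map $B\to B'$ sending $a$ and $b$ to different components --- cannot supply it: any morphism $\delta$ eligible for the universal property of the coequalizer must satisfy $\delta\alpha=\delta\beta$, i.e.\ $\delta(a)=\delta(b)$, so no admissible test map separates $a$ from $b$. What the paper uses instead is $\varphi\colon B\to D=\langle 1,a\rangle_F$ with $\varphi(a)=\varphi(b)=\varphi(c)=\varphi(d)=a$; this does coequalize $\alpha$ and $\beta$, the induced $\psi\colon C\to D$ gives $\psi(q(a))=a\neq 0$, hence $q(a)\neq 0$, and combining this with the first test map yields the contradiction. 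So your outline captures half of the argument, but the part you defer as routine is precisely where the second idea lives, and the direction you suggest for it is a dead end.
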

\begin{proof}
Suppose $\gamma \colon B \to C$ is the coequalizer of $\alpha$ and $\beta$.
Then \begin{equation}\label{EqNoCoeqInGrAlg}
\gamma(a)=\gamma(\alpha(a))=\gamma(\beta(a))=\gamma(b).\end{equation}

Let $D=\langle 1,a\rangle_F \subset B$ and let $\varphi \colon B \to D$ be the unital homomorphism
defined by
$$\varphi(a)=\varphi(b)=\varphi(c)=\varphi(d)=a.$$
Then there exists a unique morphism $\psi \colon C \to D$ such that $\psi\gamma = \varphi$:

$\xymatrix{ A \ar@<-0.5ex>[r]_\beta \ar@<0.5ex>[r]^\alpha & B  \ar[r]^\gamma \ar[d]^\varphi & C  \ar@{-->}[ld]^\psi \\
           &  D  }$

Hence $\psi(\gamma(a))=\varphi(a)=a\ne 0$ and $\gamma(a)\ne 0$.

Let $\theta \colon B \to A$ be a unital homomorphism defined by $\theta(a)=\theta(b)=0$,
$\theta(c)=a$, $\theta(d)=b$. Then there exists a unique morphism $\mu \colon C \to A$
such that $\mu\gamma = \theta$.
Since $\mu(\gamma(c))=a \ne 0$ and $\mu(\gamma(d))=b\ne 0$ belong to different homogenous components in $A$,
$\gamma(c)$ and $\gamma(d)$ belong to different homogenous components in $C$. Hence
$\gamma(a)$ and $\gamma(b)$ belong to different homogenous components in $C$
which is in contradiction with~(\ref{EqNoCoeqInGrAlg}) since we have shown above that $\gamma(a)\ne 0$.
\end{proof}

Now we describe monomorphisms and compare them with injective and graded injective homomorphisms.

\begin{proposition}\label{PropositionCriterionForMonomorphismGrAlg}
Let $f\colon A=\bigoplus_{g\in G} A^{(g)} \to B$ be a morphism in $\mathbf{GrAlg}_F$ or in $\mathbf{GrAlg}^1_F$. Then $f$ is a monomorphism if and only if $a \ne b$ for some $a,b\in \bigcup_{g\in G}
A^{(g)}$ always implies $f(a)\ne f(b)$. In particular, each monomorphism in $\mathbf{GrAlg}_F$ and in $\mathbf{GrAlg}^1_F$ is graded injective.
\end{proposition}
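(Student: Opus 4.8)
The plan is to prove the two directions of the equivalence separately and then read off the ``in particular'' clause. Throughout I will call an element \emph{homogeneous} if it lies in $\bigcup_{g\in G}A^{(g)}$, and I will use freely that a graded homomorphism sends homogeneous elements to homogeneous elements and that every graded algebra is the linear span of its homogeneous components.

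First I would treat the easy implication. Assume that $a\ne b$ always implies $f(a)\ne f(b)$ for homogeneous $a,b\in A$, and let $u,v\colon C\to A$ be an arbitrary parallel pair of morphisms in $\mathbf{GrAlg}_F$ (resp.\ $\mathbf{GrAlg}^1_F$) with $fu=fv$. For every homogeneous $c\in C$ the elements $u(c)$ and $v(c)$ are homogeneous and satisfy $f(u(c))=f(v(c))$, whence $u(c)=v(c)$ by hypothesis; extending linearly over the homogeneous components of $C$ gives $u=v$. Thus $f$ is a monomorphism.

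For the converse I would argue by contraposition: if some pair of distinct homogeneous elements $a\in A^{(p)}$, $b\in A^{(q)}$ satisfies $f(a)=f(b)$, I exhibit $u\ne v$ with $fu=fv$. Let $C$ be the algebra of polynomials in one variable $x$ over $F$, taken without constant term in the case of $\mathbf{GrAlg}_F$ and with constant term in the case of $\mathbf{GrAlg}^1_F$, endowed with the degree $\mathbb Z$-grading, so $C^{(n)}=Fx^{n}$ for $n\geqslant 1$. Define $u,v\colon C\to A$ by $u(x)=a$ and $v(x)=b$ (both unital in the unital case). Each of $u$ and $v$ is graded: since $a$ is homogeneous, $u\bigl(C^{(n)}\bigr)=Fa^{n}\subseteq A^{(p^{n})}$, and similarly for $v$. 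Then $fu(x)=f(a)=f(b)=fv(x)$, so $fu=fv$ because $x$ generates $C$ as an algebra, while $u(x)=a\ne b=v(x)$, so $u\ne v$; hence $f$ is not a monomorphism. Note that this works whether or not $p=q$, and it also handles the case $b=0$, i.e.\ the assertion that a monomorphism has no nonzero homogeneous element in its kernel.

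The ``in particular'' statement is then immediate: a monomorphism is, by the criterion just proved, injective on $\bigcup_{g\in G}A^{(g)}$, hence injective on each individual component $A^{(g)}$, which is exactly the definition of graded injectivity. I do not expect a genuine obstacle anywhere in this argument; the only point that deserves a line of verification is that $x\mapsto(\text{homogeneous element})$ really defines a \emph{graded} homomorphism out of $C$, and — in the unital setting — that $C$ with its degree grading and the maps $u,v$ are genuinely an object and morphisms of $\mathbf{GrAlg}^1_F$.
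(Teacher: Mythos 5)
Your proposal is correct and follows essentially the same route as the paper: the ``if'' direction via the linear span of homogeneous components, and the ``only if'' direction by separating two distinct homogeneous elements with the one-variable polynomial algebra (with or without constant term) carrying the degree $\mathbb Z$-grading. The extra verifications you flag (that $x\mapsto a$ is graded, and the unital case) are exactly the right points to check and go through without difficulty.
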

\begin{proof}
Let $C$ be the algebra of all polynomials in the variable $x$ with coefficients from $F$
with a constant term in the case of $\mathbf{GrAlg}^1_F$ and
without a constant term in the case of $\mathbf{GrAlg}_F$, endowed with the degree $\mathbb Z$-grading.
Then for every homogeneous $a\in A$ there exists a unique graded homomorphism $\lambda \colon C \to A$
such that $\lambda(x)=a$. Hence if there existed some $a,b\in \bigcup_{g\in G}
A^{(g)}$, $a \ne b$,
such that $f(a)=f(b)$, it would be possible to construct graded homomorphisms $\lambda, \mu \colon
C \to A$ such that $\lambda(x)=a$, $\mu(x)=b$, i.e. $\lambda\ne \mu$, but $f\lambda=f\mu$.
Therefore, the ``only if'' part is proved.

Suppose now that $f\colon A \to B$ is a graded homomorphism such that for every
homogeneous $a,b\in A$, $a \ne b$, we have $f(a)\ne f(b)$.
Let $\alpha,\beta \colon D \to A$ be two graded homomorphisms such that
$f\alpha = f\beta$. Then for every homogeneous $d\in D$
we have $\alpha(d),\beta(d)\in \bigcup_{g\in G} A^{(g)}$
and $f(\alpha(d))=f(\beta(d))$. Hence $\alpha(d)=\beta(d)$. Since $D$ is the linear span of its homogeneous
components, we have $\alpha = \beta$ and $f$ is a monomorphism.
\end{proof}

Note that not every graded injective homomorphism is a monomoprhism in $\mathbf{GrAlg}_F$ or $\mathbf{GrAlg}_F^1$.  For example, the augmentation map $\mathrm{aug} \colon FG \to F$, $\mathrm{aug}\left(\sum_{g\in G} \alpha_g u_g\right):=\sum_{g\in G} \alpha_g$,
is graded injective but not a monomorphism for a nontrivial group $G$.

Below we give an example of a monomorphism which is not an injective map.

\begin{example}
Let $A=F1_A\oplus Fa_1\oplus Fa_2 \oplus Fa_3$ where $\langle a_1, a_2, a_3 \rangle_F^2=0$.
Define a $\mathbb Z/4\mathbb Z$-grading on $A$ by $a_i \in A^{(\bar i)}$
for $i=1,2,3$. Let $B = A/(a_1+a_2+a_3)$ be endowed with the trivial grading.
Then the natural surjection $\pi \colon A \twoheadrightarrow B$
is a monomorphism both in $\mathbf{GrAlg}_F$ and $\mathbf{GrAlg}^1_F$.
\end{example}
\begin{proof}
Let $a,b \in \bigcup_{\bar i \in \mathbb Z/4\mathbb Z} A^{(\bar i)}$
such that $\pi(a)=\pi(b)$. Then $a-b = \alpha(a_1+a_2+a_3)$
for some $\alpha \in F$. Note that $\alpha$ cannot be nonzero, since otherwise
$a-b$ would have nonzero components in each of $A^{(\bar i)}$, $\bar i \ne \bar 0$.
Hence $a=b$ and Proposition~\ref{PropositionCriterionForMonomorphismGrAlg}
implies that $\pi$ is a monomorphism both in $\mathbf{GrAlg}_F$ and $\mathbf{GrAlg}^1_F$.
\end{proof}

\section{Categorical constructions in $\widetilde{\mathbf{GrAlg}_F}$
and $\widetilde{\mathbf{GrAlg}^1_F}$}\label{SectionLimColimTildeGrAlg}

We begin with an example of graded algebras for which the product in $\widetilde{\mathbf{GrAlg}_F}$
and $\widetilde{\mathbf{GrAlg}^1_F}$ exists and is quite different from their product as non-graded algebras.

Recall that by $(u_g)_{g\in G}$ we denote the standard basis in a group algebra $FG$.

\begin{proposition}\label{PropositionProductGroupAlgebrasTildeGrAlg} Let $G$ and $H$ be groups and let $F$ be a field.
Then $F(F^{\times} \times G \times H)$ is the product of $FG$ and $FH$ (with the standard gradings) in both categories $\widetilde{\mathbf{GrAlg}_F}$
and $\widetilde{\mathbf{GrAlg}^1_F}$.
\end{proposition}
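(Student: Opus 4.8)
The plan is to name the two candidate projection morphisms explicitly and then verify the universal property directly, exploiting the rigidity that comes from the homogeneous components of $FG$ and $FH$ all being one-dimensional. Write $F^\times$ for the multiplicative group of $F$. The algebra $P:=F(F^\times\times G\times H)$, carrying its standard grading (homogeneous components the lines $Fu_{(\lambda,g,h)}$), is a unital associative group-graded algebra, hence an object of both $\widetilde{\mathbf{GrAlg}^1_F}$ and $\widetilde{\mathbf{GrAlg}_F}$. Define $F$-linear maps $\pi\colon P\to FG$ and $\rho\colon P\to FH$ by $\pi(u_{(\lambda,g,h)}):=\lambda u_g$ and $\rho(u_{(\lambda,g,h)}):=u_h$. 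A one-line check on products of basis elements shows $\pi$ and $\rho$ are unital algebra homomorphisms, and each maps the line $Fu_{(\lambda,g,h)}$ isomorphically onto a homogeneous component ($Fu_g$, respectively $Fu_h$), so both are graded injective; thus $\pi$ and $\rho$ are morphisms in both categories.

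For the universal property, let $\Gamma\colon C=\bigoplus_{k\in K}C^{(k)}$ be an object with graded injective homomorphisms $\alpha\colon C\to FG$ and $\beta\colon C\to FH$. Since $\alpha$ maps each $C^{(k)}$ injectively into a one-dimensional component of $FG$, every $C^{(k)}$ is at most one-dimensional, so $C$ has a homogeneous basis $(c_k)_{k\in\supp\Gamma}$; write $\alpha(c_k)=\lambda_k u_{a(k)}$ and $\beta(c_k)=\mu_k u_{b(k)}$ with $\lambda_k,\mu_k\in F^\times$, $a(k)\in G$, $b(k)\in H$ (all uniquely determined because the relevant components are lines and $\alpha(c_k),\beta(c_k)\ne 0$). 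Applying $\alpha$ to $c_{k_1}c_{k_2}$ shows this element is nonzero, hence equals $\xi_{k_1,k_2}c_{k_1k_2}$ with $\xi_{k_1,k_2}\in F^\times$; comparing homogeneous components of $\alpha(c_{k_1}c_{k_2})$ and $\beta(c_{k_1}c_{k_2})$ gives $a(k_1k_2)=a(k_1)a(k_2)$, $b(k_1k_2)=b(k_1)b(k_2)$, $\lambda_{k_1}\lambda_{k_2}=\xi_{k_1,k_2}\lambda_{k_1k_2}$ and $\mu_{k_1}\mu_{k_2}=\xi_{k_1,k_2}\mu_{k_1k_2}$. Now define $\sigma\colon C\to P$ by $\sigma(c_k):=\mu_k u_{(\lambda_k\mu_k^{-1},\,a(k),\,b(k))}$, extended linearly. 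From the last two relations $\lambda_{k_1}\lambda_{k_2}(\mu_{k_1}\mu_{k_2})^{-1}=\lambda_{k_1k_2}\mu_{k_1k_2}^{-1}$, and substituting everything in shows $\sigma(c_{k_1})\sigma(c_{k_2})=\xi_{k_1,k_2}\sigma(c_{k_1k_2})=\sigma(c_{k_1}c_{k_2})$, so $\sigma$ is an algebra homomorphism; it is graded injective since it carries $C^{(k)}$ isomorphically onto the line $Fu_{(\lambda_k\mu_k^{-1},a(k),b(k))}$. In the unital case the unit of $C$ is homogeneous of degree $e_K$, and taking $c_{e_K}:=1_C$ forces $\lambda_{e_K}=\mu_{e_K}=1$, $a(e_K)=e_G$, $b(e_K)=e_H$, whence $\sigma(1_C)=1_P$.

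Finally, $\pi\sigma(c_k)=\mu_k(\lambda_k\mu_k^{-1})u_{a(k)}=\lambda_k u_{a(k)}=\alpha(c_k)$ and $\rho\sigma(c_k)=\mu_k u_{b(k)}=\beta(c_k)$, so $\pi\sigma=\alpha$ and $\rho\sigma=\beta$. For uniqueness, any graded injective $\sigma'$ with $\pi\sigma'=\alpha$ and $\rho\sigma'=\beta$ must send $c_k$ to some $\nu u_{(\theta,g,h)}$ with $\nu,\theta\in F^\times$, $g\in G$, $h\in H$; applying $\pi$ forces $g=a(k)$ and $\nu\theta=\lambda_k$, applying $\rho$ forces $h=b(k)$ and $\nu=\mu_k$, hence $\theta=\lambda_k\mu_k^{-1}$ and $\sigma'=\sigma$. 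The only delicate point is the bookkeeping in the middle step: one must check that $\lambda_k,\mu_k,a(k),b(k),\xi_{k_1,k_2}$ are all genuinely well-defined — this is precisely where graded injectivity together with one-dimensionality of the components of $FG$ and $FH$ enters — and that the structure constants $\xi_{k_1,k_2}$ cancel, so that $k\mapsto\lambda_k\mu_k^{-1}$ transforms consistently with the multiplication. Once this is set up, the homomorphism verification is a routine substitution, and the unital and non-unital cases are handled by the same argument.
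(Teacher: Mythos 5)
Your proof is correct and follows essentially the same route as the paper: the same projections $\pi(u_{(\lambda,g,h)})=\lambda u_g$, $\rho(u_{(\lambda,g,h)})=u_h$, the same observation that graded injectivity into one-dimensional components forces $\dim C^{(k)}\leqslant 1$, and the same formula $\sigma(c_k)=\mu_k u_{(\lambda_k\mu_k^{-1},a(k),b(k))}$. You are in fact more careful than the paper, which states the defining formula for the mediating map but leaves the verification that it is a graded injective algebra homomorphism (your cancellation of the structure constants $\xi_{k_1,k_2}$) implicit.
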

\begin{proof}
Let $\pi_1 \colon F(F^{\times} \times G \times H) \to FG$
and $\pi_2 \colon F(F^{\times} \times G \times H) \to FH$
be the homomorphisms defined by $\pi_1(u_{(\alpha,g,h)})=\alpha u_g$
and $\pi_2(u_{(\alpha,g,h)})=u_h$
for $\alpha \in F^\times$, $g\in G$, $h\in H$.
Obviously, they are graded injective.

Suppose there exists a graded algebra $A$ and graded injective homomorphisms
$\varphi_1 \colon A \to FG$ and $\varphi_2 \colon A \to FH$.
We claim that there exists a unique graded injective homomorphism
$\varphi \colon A \to F(F^{\times} \times G \times H)$
such that the following diagram commutes:

$$\xymatrix{ & F(F^{\times} \times G \times H)
\ar[ld]_{\pi_1} \ar[rd]^{\pi_2} & \\
FG & & FH \\
& A \ar[lu]_{\varphi_1} \ar[ru]^{\varphi_2} \ar@{-->}[uu]^\varphi &
}
$$
First we notice that, since each graded component of $FG$ has dimension one
and $\varphi_1$ is graded injective,
each graded component of $A$ must have dimension at most one.
Suppose now that the graded injective homomorphism $\varphi$
indeed exists.
Let $a \in A$ be a nonzero homogeneous element.
Then $\varphi(a)$
must be homogeneous too, i.e. $\varphi(a)=\alpha u_{(\beta, g,h)}$
for some scalars $\alpha, \beta \in F^\times$ and group elements $g\in G$, $h\in H$.
Then $\varphi_1(a)=\pi_1\varphi(a)= \alpha\beta u_g$
and $\varphi_2(a)=\pi_2\varphi(a)= \alpha u_h$, i.e. $\varphi(a)$
is uniquely determined by $\varphi_1(a)$ and $\varphi_2(a)$.

Given $\varphi_1$ and $\varphi_2$, the homomorphism $\varphi$ is defined as follows.
If $\varphi_1(a)=\lambda u_g$ and $\varphi_2(a)=\mu u_h$,
then $\varphi(a)=\mu u_{(\lambda/\mu, g,h)}$.
\end{proof}
\begin{corollary}
If the field $F$ consists of more than $2$ elements, then the functors $R$ and $R_1$ do not have left adjoints.
\end{corollary}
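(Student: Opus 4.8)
The plan is to invoke the standard fact that a functor possessing a left adjoint is itself a right adjoint in some adjunction, and therefore preserves every limit that exists in its domain; in particular it preserves binary products. It thus suffices to exhibit two objects of $\widetilde{\mathbf{GrAlg}_F}$ (and of $\widetilde{\mathbf{GrAlg}^1_F}$) whose product exists but is not carried by $R$ (resp.\ $R_1$) to a product in $\mathbf{Grp}$.

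First I would compute $R$ on the standard grading of a group algebra. For $\Gamma \colon FG = \bigoplus_{g\in G} F u_g$ we have $\supp\Gamma = G$ and $(Fu_g)(Fu_h) = Fu_{gh} \neq 0$ for all $g,h\in G$; thus, in the presentation $G_\Gamma \cong \mathcal F([\supp\Gamma])/N$ recorded in the Introduction, $N$ is the normal closure of all words $[g][h][gh]^{-1}$, which is exactly the kernel of the natural surjection $\mathcal F([G])\to G$, $[g]\mapsto g$. Hence $R(FG)\cong G$; in particular $R(F)\cong\{e\}$ (viewing $F$ as the group algebra $F\{e\}$ with its standard, trivial grading) and $R\bigl(F(F^\times)\bigr)\cong F^\times$.

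Now I would take $G = H = \{e\}$, so that $FG = FH = F$, and apply Proposition~\ref{PropositionProductGroupAlgebrasTildeGrAlg}: the product of $F$ and $F$ in $\widetilde{\mathbf{GrAlg}_F}$, and (since $F$ is unital) in $\widetilde{\mathbf{GrAlg}^1_F}$, is $F(F^\times)$ with its standard grading. If $R$ had a left adjoint it would preserve this product, and since products in $\mathbf{Grp}$ are unique up to isomorphism this would force $R\bigl(F(F^\times)\bigr)\cong R(F)\times R(F)\cong\{e\}$. But $R\bigl(F(F^\times)\bigr)\cong F^\times$, which is nontrivial because $|F|>2$ --- a contradiction. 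The same argument, run inside $\widetilde{\mathbf{GrAlg}^1_F}$ with the same objects, rules out a left adjoint for $R_1$. I do not expect any real obstacle here: all the work is done by Proposition~\ref{PropositionProductGroupAlgebrasTildeGrAlg}, whose product acquires the spurious factor $F^\times$ precisely because graded injective homomorphisms may rescale homogeneous components, and $R$ records this factor in the universal group while the product in $\mathbf{Grp}$ does not; the only routine point is the identification $R(FG)\cong G$, immediate from the quoted presentation of $G_\Gamma$.
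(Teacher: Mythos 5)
Your proof is correct and follows essentially the same route as the paper: invoke the fact that a functor with a left adjoint preserves products, apply Proposition~\ref{PropositionProductGroupAlgebrasTildeGrAlg}, and observe that $R$ sends the product $F(F^{\times}\times G\times H)$ to $F^{\times}\times G\times H$ rather than to $G\times H$. The paper phrases the contradiction for arbitrary finite $G$ and $H$, whereas you specialize to $G=H=\{e\}$ and make explicit the (correct) identification $R(FG)\cong G$; these are cosmetic differences only.
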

\begin{proof}
Each functor that has a left adjoint preserves limits (see e.g.~\cite[Chapter~V, Section 5, Theorem~1]{MacLaneCatWork}) and, in particular, products. However, $$R(F(F^{\times} \times G \times H))=
R_1(F(F^{\times} \times G \times H))=F^{\times} \times G \times H
\ncong R(FG) \times R(FH)=G\times H$$
in the case when both groups $G$ and $H$ are finite.
\end{proof}
In the next section (see Propositions~\ref{PropositionAbsenceOfLeftAdjointUGGF} and~\ref{PropositionAbsenceOfLeftAdjointUGGF1}) we prove that the restriction on the cardinality of the field is superfluous, that is, over any field the functors $R$ and $R_1$ do not have left adjoints.

Now we present an example of algebras for which their product in
$\widetilde{\mathbf{GrAlg}_F}$ and $\widetilde{\mathbf{GrAlg}_F^1}$
does not exist.

\begin{theorem}
Let $A=F[a_1,a_2]$, $B=F[b_1,b_2,b_3]$, algebras of polynomials in commutative variables
with coefficients from a field $F$,
endowed with the degree $\mathbb Z$-grading.
Then the product of $A$ and $B$  exists neither in $\widetilde{\mathbf{GrAlg}_F}$ nor in $\widetilde{\mathbf{GrAlg}_F^1}$.
\end{theorem}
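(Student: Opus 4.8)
The plan is to argue by contradiction, in the same spirit as the proof of Theorem~\ref{TheoremAbsensceOfProductsGrAlg} and of the preceding propositions: I assume a product $P := A \times B$ exists in $\widetilde{\mathbf{GrAlg}_F}$ (resp.\ $\widetilde{\mathbf{GrAlg}_F^1}$), with projections $\pi_A \colon P \to A$ and $\pi_B \colon P \to B$, choose suitable ``probe'' objects together with morphisms into $A$ and $B$, lift them to $P$ via the universal property, and extract a contradiction from the rigidity of graded injective homomorphisms. The new feature compared with $\mathbf{GrAlg}_F$ is that here all morphisms must be graded injective, which makes the free associative algebra useless as a probe (it admits no graded injective homomorphism into a commutative algebra, since $xy$ and $yx$ would have to have distinct images). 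Instead I would take $D$ to be the polynomial algebra $F[x,y]$ in two commuting variables with the degree $\mathbb Z$-grading (unital in $\widetilde{\mathbf{GrAlg}_F^1}$, without constant term in $\widetilde{\mathbf{GrAlg}_F}$), $C$ the corresponding polynomial algebra $F[x]$ with the degree grading, and $\tau \colon C \to D$ the natural inclusion with $\tau(x)=x$; these are objects and a morphism of the relevant category, and a morphism out of $C$ is determined by the image of $x$.

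Next I would write down the test morphisms. Since $a_1,a_2$ are algebraically independent over $F$, the assignments $x\mapsto a_1$, $y\mapsto a_2$ and $x\mapsto a_1$, $y\mapsto a_1+a_2$ extend to homomorphisms $\alpha_1,\alpha_2\colon D\to A$ which on each homogeneous component carry a monomial basis to a linearly independent family; hence both are graded injective, they agree on $x$, but $\alpha_1(y)\ne\alpha_2(y)$. Likewise $x\mapsto b_1$, $y\mapsto b_2$ defines a graded injective homomorphism $\beta\colon D\to B$. By the universal property of $P$ there are unique morphisms $\psi_1,\psi_2\colon D\to P$ with $\pi_A\psi_i=\alpha_i$ and $\pi_B\psi_i=\beta$. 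Composing with $\tau$ and using that $\alpha_1\tau=\alpha_2\tau$ (both send $x$ to $a_1$), the uniqueness clause of the universal property applied to the object $C$ gives $\psi_1\tau=\psi_2\tau$, i.e.\ $\psi_1(x)=\psi_2(x)=:w$; and $w\ne 0$ because $\pi_A(w)=a_1\ne 0$.

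The contradiction then runs as follows. Since $\psi_1$ is graded and $x,y$ both lie in the degree-$1$ component $D^{(1)}$, the elements $\psi_1(x),\psi_1(y)$ lie in a single homogeneous component $P^{(\gamma_1)}$ of $P$; similarly $\psi_2(x),\psi_2(y)\in P^{(\gamma_2)}$. As $w=\psi_1(x)=\psi_2(x)$ is a nonzero element of $P^{(\gamma_1)}\cap P^{(\gamma_2)}$, we have $\gamma_1=\gamma_2=:\gamma$, so $\psi_1(y),\psi_2(y)\in P^{(\gamma)}$. Now $\pi_B(\psi_1(y))=\beta(y)=\pi_B(\psi_2(y))$, so $\psi_1(y)-\psi_2(y)$ is an element of $P^{(\gamma)}$ lying in the kernel of $\pi_B$; since $\pi_B$ is graded injective, its restriction to $P^{(\gamma)}$ is injective, whence $\psi_1(y)=\psi_2(y)$. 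Applying $\pi_A$ gives $a_2=\alpha_1(y)=\alpha_2(y)=a_1+a_2$, i.e.\ $a_1=0$, which is absurd. Hence the product does not exist, in either category.

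The step I expect to require the most care is the choice of probes together with the reason the argument is valid here but must fail for, say, group algebras, where Proposition~\ref{PropositionProductGroupAlgebrasTildeGrAlg} does produce a product: the whole mechanism depends on $A^{(1)}$ having dimension at least $2$, so that two graded injective homomorphisms $D\to A$ can agree on $x$ yet differ on $y$ --- for an algebra all of whose homogeneous components are one-dimensional there is no graded injective homomorphism $F[x,y]\to{}$ it at all, and the construction collapses. One should also verify the routine points: that $\alpha_1,\alpha_2,\beta$ are genuinely graded injective (this is just algebraic independence of the chosen generators) and that the same $D$, $C$, $\tau$ serve verbatim in the unital and non-unital cases (handled by taking $D$ and $C$ with or without constant term). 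Finally, it is worth noting that the number of generators of $B$ plays no role beyond $B\ne 0$ and the existence of a graded injective homomorphism $F[x,y]\to B$; three variables are used only for definiteness.
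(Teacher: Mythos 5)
Your proof is correct, but it takes a genuinely different route from the paper's. The paper keeps the one-variable probe $C=F[x]$ throughout: for each pair of nonzero homogeneous elements $a\in A$, $b\in B$ the universal property produces a unique non-nilpotent homogeneous element of $A\times B$ projecting onto $(a,b)$, and the contradiction is extracted from the family of such lifts $c_{ik}$ of $(a_i,b_k)$ and $d_{ijk\ell}$ of $(a_i+a_j,b_k+b_\ell)$: the relation $c_{ik}+c_{j\ell}=d_{ijk\ell}$ forces all of these elements into a single homogeneous component, after which graded injectivity of $\pi$ gives $c_{11}=c_{12}$ while $\rho(c_{11})=b_1\ne b_2=\rho(c_{12})$. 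You instead transplant the two-variable probe from the proof of Theorem~\ref{TheoremAbsensceOfProductsGrAlg}, replacing the free algebra by $F[x,y]$ precisely so that the test maps become graded injective (your observation that algebraic independence of $(a_1,a_2)$ and of $(a_1,a_1+a_2)$ is what makes $\alpha_1,\alpha_2$ graded injective is the right one); the restriction along $\tau\colon F[x]\hookrightarrow F[x,y]$ forces $\psi_1(x)=\psi_2(x)\ne 0$, hence a common homogeneous component for $\psi_1(y),\psi_2(y)$, and a single application of graded injectivity of $\pi_B$ yields $\psi_1(y)=\psi_2(y)$ and the absurdity $a_2=a_1+a_2$. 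Your version makes the parallel with Theorem~A transparent, needs only two test maps rather than a family indexed by $(i,j,k,\ell)$, and sidesteps the question of why a sum of two lifts such as $c_{ik}+c_{j\ell}$ is itself homogeneous, which the paper's uniqueness argument relies on; the paper's version, in exchange, uses only the simplest probe $F[x]$. Your closing remarks (why the mechanism must fail when all homogeneous components are one-dimensional, as for group algebras, and that two generators in $B$ would suffice) are accurate.
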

\begin{proof}
Suppose $A\times B$ is the product of $A$ and $B$
and $\pi \colon A\times B \to A$ and $\rho \colon A\times B \to B$ are the corresponding projections.
Denote by $C$ the algebra of all polynomials in the variable $x$ with coefficients from $F$
with a constant term in the case of $\mathbf{GrAlg}^1_F$ and
without a constant term in the case of $\mathbf{GrAlg}_F$, endowed with the degree $\mathbb Z$-grading.

Now we use the same trick as in the proof of Theorem~\ref{TheoremAbsensceOfProductsGrAlg}.
For any nonzero (and therefore non-nilpotent) homogeneous $a \in A$ and $b\in B$,
there exist unique graded injective homomorphisms $\alpha \colon C \to A$
and $\beta \colon C \to B$ such that $\alpha(x)= a$ and $\beta(x)=b$.
Now the definition of a product implies that there exists a unique graded injective
homomorphism $\psi \colon C \to A\times B$ such that $\pi(\psi(x))=a$
and $\rho(\psi(x))=b$:
$$\xymatrix{ & A\times B
\ar[ld]_{\pi} \ar[rd]^{\rho} & \\
A & & B \\
&  C \ar[lu]^\alpha \ar[ru]_\beta \ar@{-->}[uu]^\psi &
}
$$
 Since $\psi$ is uniquely determined by the image of $x$,
there exists the unique non-nilpotent homogeneous element $c:= \psi(x)\in A\times B$
such that $\pi(c)=a$, $\rho(c)=b$.

Mapping $x$ to the elements $a_i$, $b_k$, $a_i + a_j$, $b_k+b_\ell$, we obtain unique non-nilpotent homogeneous elements $c_{ik}, d_{ijk\ell} \in A \times B$, where $1\leqslant i,j \leqslant 2$, $1\leqslant k,\ell \leqslant 3$, $i\ne j$, $k\ne \ell$, such that
$$\pi(c_{ij})=a_i,\text{ } \pi(d_{ijk\ell})=a_i+a_j,\text{ } \rho(c_{ij})=b_j, \text{ }\rho(d_{ijk\ell})=b_k+b_\ell.$$
Since
$$\pi(c_{ik}+c_{j\ell})=\pi(d_{ijk\ell})=a_i+a_j,\text{ }
\rho(c_{ik}+c_{j\ell})=\rho(d_{ijk\ell})=b_k+b_\ell$$
and $a_i+a_j$ and $b_k+b_\ell$ are non-nilpotent,
$c_{ik}+c_{j\ell}$ is non-nilpotent too and we must have $c_{ik}+c_{j\ell} = d_{ijk\ell}$.
Since all the elements $c_{ik},c_{j\ell},d_{ijk\ell}$ are nonzero,
for each quadruple $(i,j,k,\ell)$ the elements $c_{ik},c_{j\ell},d_{ijk\ell}$ belong to the same homogeneous component of $A\times B$. Changing $i,j,k,\ell$, we obtain that
all the elements $c_{ik},d_{ijk\ell}$ for all values $(i,j,k,\ell)$
belong to the same homogeneous component of $A\times B$.
 Since $\pi(c_{11})=\pi(c_{12})=a_1$ and
 $\pi$ is graded injective, $c_{11}=c_{12}$.
 However $\rho(c_{11})=b_1\ne b_2=\rho(c_{12})$ and we get a contradiction.
\end{proof}

Below we show that coproducts in $\widetilde{\mathbf{GrAlg}_F}$
and $\widetilde{\mathbf{GrAlg}^1_F}$ do not always exist.

\begin{proposition} Let $G$ and $H$ be groups and let $F$ be a field.
Then the coproduct of $FG$ and $FH$ (with the standard gradings) in
the category $\widetilde{\mathbf{GrAlg}_F}$ does not exist.
\end{proposition}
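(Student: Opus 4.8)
The plan is to assume that the coproduct $P := FG \sqcup FH$ exists in $\widetilde{\mathbf{GrAlg}_F}$, with structural morphisms $i \colon FG \to P$ and $j \colon FH \to P$, and then to derive a contradiction by producing a morphism out of $P$ that annihilates a nonzero homogeneous element of $P$ --- which no graded injective homomorphism can do. Note first that $P$ is graded by some group and that, since $i$ and $j$ are graded homomorphisms, every $i(u_g)$ and every $j(u_h)$ is homogeneous; as $i,j$ are graded injective, these elements are nonzero. Fix $g\in G$ and $h\in H$, write $i(u_g)\in P^{(p)}$ and $j(u_h)\in P^{(q)}$, so that $i(u_g)\,j(u_h)\in P^{(pq)}$.

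The first step is to check that $i(u_g)\,j(u_h)\ne 0$. Let $\beta_0\colon FH\to FG$ be the algebra homomorphism with $\beta_0(u_h)=1_{FG}$ for all $h$, i.e. the augmentation $FH\to F$ followed by the inclusion $F\hookrightarrow FG$; it is graded and graded injective since its restriction to each one-dimensional component $Fu_h$ is injective. The universal property of the coproduct, applied to the pair $\id_{FG}\colon FG\to FG$ and $\beta_0\colon FH\to FG$, furnishes a morphism $s\colon P\to FG$ with $s\,i=\id_{FG}$ and $s\,j=\beta_0$. Then $s\bigl(i(u_g)\,j(u_h)\bigr)=u_g\cdot 1_{FG}=u_g\ne 0$, so $i(u_g)\,j(u_h)$ is a nonzero homogeneous element of $P^{(pq)}$.

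The second step is to test $P$ against $C:=F\times F$ equipped with the trivial grading; this is a legitimate object of $\widetilde{\mathbf{GrAlg}_F}$ that, crucially, has zero divisors. Define $\alpha\colon FG\to C$ by $\alpha(u_g)=(1,0)$ for all $g$ and $\beta\colon FH\to C$ by $\beta(u_h)=(0,1)$ for all $h$, each extended linearly. Both are algebra homomorphisms (non-unital, which is permitted here), both are graded (everything in $C$ is homogeneous), and both are graded injective because their restrictions to each $Fu_g$, respectively $Fu_h$, are injective. By the universal property there is a morphism $\psi\colon P\to C$ with $\psi\,i=\alpha$ and $\psi\,j=\beta$, so that $\psi\bigl(i(u_g)\,j(u_h)\bigr)=\alpha(u_g)\,\beta(u_h)=(1,0)(0,1)=(0,0)$.

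This is the contradiction: $i(u_g)\,j(u_h)$ is a nonzero element of the component $P^{(pq)}$, but $\psi$ is graded injective, so $\psi\big|_{P^{(pq)}}$ is injective and cannot send it to $0$. Hence no coproduct of $FG$ and $FH$ exists in $\widetilde{\mathbf{GrAlg}_F}$. The only routine verifications are that $\beta_0$, $\alpha$, $\beta$ are genuine morphisms of $\widetilde{\mathbf{GrAlg}_F}$; these use that each homogeneous component of a group algebra is one-dimensional and that unitality of morphisms is not demanded --- the latter being exactly why the argument breaks in $\widetilde{\mathbf{GrAlg}^1_F}$, where in fact $F(G*H)$ is the coproduct. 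I do not expect a serious obstacle here; the one genuine idea is the choice of a trivially graded test algebra with zero divisors, which forces a product of two nonzero homogeneous elements to vanish while the induced map out of the putative coproduct is forced to remain graded injective.
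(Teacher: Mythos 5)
Your proof is correct and follows the same skeleton as the paper's: assume the coproduct $P$ exists, produce one cocone that forces $i(u_g)j(u_h)\ne 0$ and another whose induced (necessarily graded injective) map out of $P$ sends this homogeneous element to $0$. The only differences are in the choice of test objects: for the nonvanishing step the paper maps into $F(G\times H)$ via the natural embeddings, while you map into $FG$ via the identity and the augmentation-followed-by-inclusion; for the vanishing step the paper uses $FG\times FH$ equipped with a specially chosen $G\times H$-grading (so that the two embeddings remain unital and graded), whereas you use $F\times F$ with the trivial grading and the non-unital maps $u_g\mapsto(1,0)$, $u_h\mapsto(0,1)$. Your second test object is more elementary, but it leans essentially on non-unitality (as you correctly observe, this is exactly why the argument does not transfer to $\widetilde{\mathbf{GrAlg}^1_F}$, where $F(G*H)$ is the coproduct); the paper's choice of $FG\times FH$ with its product grading illustrates the same obstruction with unital embeddings, the failure there being located instead in the grading. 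All the routine verifications you flag (that $\beta_0$, $\alpha$, $\beta$ are graded injective because each homogeneous component of a group algebra is one-dimensional) do go through.
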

\begin{proof}
Suppose $A$ is the coproduct of $FG$ and $FH$  in $\widetilde{\mathbf{GrAlg}_F}$ and $i_1 \colon FG \to A$
and $i_2 \colon FH \to A$ are the corresponding morphisms.
Let $\varphi_1 \colon FG \to F(G\times H)$ and
$\varphi_2 \colon FH \to F(G\times H)$ be the natural embeddings.
Then there must exist $\varphi \colon A \to F(G\times H)$
such that the following diagram commutes:
$$\xymatrix{ & A
\ar@{-->}[dd]^\varphi & \\
FG \ar[ru]^{i_1}\ar[rd]^{\varphi_1} & & FH \ar[lu]_{i_2}\ar[ld]_{\varphi_2} \\
& F(G \times H)    &
}
$$

In particular, $$\varphi(i_1(u_g)i_2(u_h))=u_{(g,1_H)}u_{(1_G,h)}= u_{(g,h)}\ne 0$$
and consequently $i_1(u_g)i_2(u_h)\ne 0$.

Let $FG \times FH := \lbrace (a, b) \mid a \in FG,\ b\in FH\rbrace$ be
the algebra with the componentwise operations
where the $G\times H$-grading is defined by $(u_g, 0)\in (FG \times FH)^{\bigl((g, 1_H )\bigr)}$
and $(0, u_h)\in (FG \times FH)^{\bigl((1_G, h)\bigr)}$ for all $g\in G$ and $h\in H$.

Now let $\psi_1 \colon FG \to FG \times FH$ and
$\psi_2 \colon FH \to FG\times FH$ be the natural embeddings.
Then there must exist $\psi \colon A \to FG \times FH$
such that the following diagram commutes:
$$\xymatrix{ & A
\ar@{-->}[dd]^\psi & \\
FG \ar[ru]^{i_1}\ar[rd]^{\psi_1} & & FH \ar[lu]_{i_2}\ar[ld]_{\psi_2} \\
& FG \times FH    &
}
$$
In particular, $$\psi(i_1(u_g)i_2(u_h))=(u_g, 0)(0, u_h)= 0$$ and we get a contradiction since $i_1(u_g)i_2(u_h)\ne 0$ is a homogeneous element as a product of homogeneous elements and $\psi$
is graded injective.
\end{proof}

At the same time, the coproduct of $FG$ and $FH$ in $\widetilde{\mathbf{GrAlg}_F^1}$ equals $F(G*H)$,
the group algebra of the coproduct of $G$ and $H$ in the category of groups.
In Proposition~\ref{PropositionCoproductDoesntExistTilde} below
we construct an example of graded algebras having no
coproduct in both $\widetilde{\mathbf{GrAlg}_F}$ and $\widetilde{\mathbf{GrAlg}_F^1}$.

\begin{proposition}\label{PropositionCoproductDoesntExistTilde} Let $F$ be a field
and let $A_i=\langle 1, a_i\rangle_F$, where $a_i^2=0$, $i=1,2$, be two two-dimensional algebras
 with the $\mathbb Z/2\mathbb Z$-grading defined by $a_i \in A_i^{(\bar 1)}$.
Then the coproduct of $A_1$ and $A_2$ exists neither in
the category
$\widetilde{\mathbf{GrAlg}_F}$ nor in the category $\widetilde{\mathbf{GrAlg}_F^1}$.
\end{proposition}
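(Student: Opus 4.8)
The plan is to assume that the coproduct $A := A_1 \sqcup A_2$ exists (in $\widetilde{\mathbf{GrAlg}_F}$ or in $\widetilde{\mathbf{GrAlg}_F^1}$), to denote by $i_1 \colon A_1 \to A$ and $i_2 \colon A_2 \to A$ the structure morphisms, and to derive a contradiction by probing $A$ with two four-dimensional algebras that differ only in whether a single product vanishes. Since $i_1, i_2$ are graded homomorphisms and $a_1 \in A_1^{(\bar 1)}$, $a_2 \in A_2^{(\bar 1)}$ are homogeneous, the images $i_1(a_1), i_2(a_2)$ are homogeneous in $A$, and therefore so is the product $w := i_2(a_2)\, i_1(a_1)$, which lies in a single homogeneous component of $A$. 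The whole argument will turn on this one element $w$: one target will certify that $w \ne 0$, while the other will produce a graded injective homomorphism out of $A$ that annihilates $w$ --- an impossibility.

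First I would set up the test algebras. Let $D := F\langle u, v \rangle / (u^2, v^2, vu)$, the four-dimensional unital algebra with basis $1, u, v, uv$ in which $uv \ne 0$ but $vu = 0$, and let $\widetilde D := F[u, v]/(u^2, v^2)$, the commutative four-dimensional unital algebra with basis $1, u, v, uv$, in which $vu = uv \ne 0$. Equip both with the $\mathbb Z^2$-grading in which $1$ has degree $(0,0)$, $u$ degree $(1,0)$ and $v$ degree $(0,1)$; using $u^2 = v^2 = 0$ one checks at once that this is a grading in either case, with $uv$ homogeneous of degree $(1,1)$. The unital homomorphisms $A_i \to D$ and $A_i \to \widetilde D$ determined by $a_1 \mapsto u$ and $a_2 \mapsto v$ are well defined because $a_i^2 = 0$, and they are graded injective because each $a_i$ goes to a nonzero homogeneous element; hence they are morphisms in $\widetilde{\mathbf{GrAlg}_F^1}$, and a fortiori in $\widetilde{\mathbf{GrAlg}_F}$.

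Next I would invoke the universal property of the coproduct twice, obtaining graded injective homomorphisms $\chi \colon A \to D$ and $\widetilde\chi \colon A \to \widetilde D$ with $\chi i_j$ and $\widetilde\chi i_j$ given by $a_1 \mapsto u$, $a_2 \mapsto v$. Then
$$\widetilde\chi(w) = \widetilde\chi\bigl(i_2(a_2)\bigr)\,\widetilde\chi\bigl(i_1(a_1)\bigr) = vu = uv \ne 0,$$
so $w \ne 0$; on the other hand, in $D$,
$$\chi(w) = \chi\bigl(i_2(a_2)\bigr)\,\chi\bigl(i_1(a_1)\bigr) = vu = 0 .$$
But $w$ is a nonzero homogeneous element of $A$ and $\chi$ is graded injective, hence injective on the homogeneous component containing $w$, which forces $\chi(w) \ne 0$. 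This contradiction shows that $A_1 \sqcup A_2$ exists in neither category.

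I expect the one genuinely inventive step to be the choice of the asymmetric algebra $D$ with $uv \ne 0 = vu$, together with the realization that it (and its symmetric companion $\widetilde D$) still admits a group grading for which the comparison maps $a_i \mapsto u, v$ remain graded injective --- this is precisely what defeats the ``obvious'' candidate coproduct $F\langle a_1, a_2\rangle/(a_1^2, a_2^2)$, in which $i_2(a_2)i_1(a_1) \ne 0$. Everything else --- associativity and gradedness of $D$ and $\widetilde D$, well-definedness of the comparison maps, and the two short computations above --- is routine. It is worth noting that, in contrast with the group-algebra coproduct discussed above, here a single pair of test algebras disposes of both the unital and the non-unital category at once, since $A_1$, $A_2$, $D$, $\widetilde D$ and all the maps involved are unital.
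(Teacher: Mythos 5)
Your proof is correct and follows essentially the same strategy as the paper's: probe the putative coproduct with two test algebras that disagree on whether a product of the homogeneous images of $a_1$ and $a_2$ vanishes, and use graded injectivity of the induced comparison map on the single homogeneous component containing that product to reach a contradiction. Your asymmetric algebra $D$ is, up to the choice of grading group, the paper's second test algebra $B$, and your commutative $\widetilde D$ plays the role of the paper's three-dimensional $\mathbb Z/3\mathbb Z$-graded algebra $A_0$ (the paper uses $A_0$ to force $i_1(a_1)i_2(a_2)=0$ and $B$ to contradict it, while you force $i_2(a_2)i_1(a_1)\ne 0$ and then annihilate it).
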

\begin{proof}
Suppose $A$ is the coproduct of $A_1$ and $A_2$ and $i_j \colon A_j \to A$, $j=1,2$, are the corresponding morphisms.
Let $A_0=\langle 1, a_1,a_2\rangle_F$ be the three-dimensional algebra
defined by $a_1^2=a_2^2=a_1a_2=a_2a_1 = 0$
with the $\mathbb Z/3\mathbb Z$-grading defined by $a_j \in A_0^{(\bar j)}$, $j=1,2$.
Let $\varphi_j \colon A_j \to A_0$ be the natural embeddings.

There must exist $\varphi \colon A \to A_0$
such that the following diagram commutes:
$$\xymatrix{ & A
\ar@{-->}[dd]^\varphi & \\
A_1 \ar[ru]^{i_1}\ar[rd]^{\varphi_1} & & A_2 \ar[lu]_{i_2}\ar[ld]_{\varphi_2}
 \\
& A_0    &
}
$$

In particular, $\varphi(i_1(a_1)i_2(a_2))=a_1 a_2=0$.
Since both $i_1(a_1)$ and $i_2(a_2)$
are homogeneous elements and $\varphi$ is graded injective, we get
\begin{equation}\label{eq:equal0}
i_1(a_1)i_2(a_2) = 0.
\end{equation}

Now let $B=\langle 1, b_1,b_2, b_1b_2\rangle_F$ be the four-dimensional algebra
defined by $b_1^2=b_2^2=b_2b_1 = 0$
and the $\mathbb Z/2\mathbb Z\times \mathbb Z/2\mathbb Z$-grading defined by $b_1 \in B^{(\bar 1,\bar 0)}$, $b_2 \in B^{(\bar 0,\bar 1)}$.
Let $\psi_j \colon A_j \to B$, where $j=1,2$, be the embeddings defined by $a_j \mapsto b_j$, $1\mapsto 1$.

There must exist $\psi \colon A \to B$
such that the following diagram commutes:
$$\xymatrix{ & A
\ar@{-->}[dd]^\psi & \\
A_1\ar[ru]^{i_1}\ar[rd]^{\psi_1} & & A_2 \ar[lu]_{i_2}\ar[ld]_{\psi_2} \\
& B & }$$
In particular, $\psi(i_1(a_1)i_2(a_2))=b_1 b_2 \ne 0$ and we get a contradiction with~\eqref{eq:equal0}.
\end{proof}

Note that the map $i$ in Proposition~\ref{PropositionEqualizerGrAlg} is injective and therefore graded injective.
Hence the embeddings $\widetilde{\mathbf{GrAlg}_F} \subset \mathbf{GrAlg}_F$ and $\widetilde{\mathbf{GrAlg}_F^1} \subset \mathbf{GrAlg}_F^1$ preserve equalizers.

\begin{proposition}
Let $f,g \colon A \to B$ be two different morphisms in $\widetilde{\mathbf{GrAlg}_F}$ or $\widetilde{\mathbf{GrAlg}_F^1}$. Suppose the grading on the algebra $B$ is trivial.
Then the coequalizer of $f$ and $g$ does not exist.
\end{proposition}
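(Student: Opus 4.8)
The plan is to argue by contradiction: assume a coequalizer exists and show that it forces $f=g$. The single observation that drives everything is that \emph{any} morphism in $\widetilde{\mathbf{GrAlg}_F}$ (or $\widetilde{\mathbf{GrAlg}_F^1}$) whose domain carries the trivial grading is automatically injective as a map of sets. Indeed, if $\gamma\colon B\to C$ is such a morphism and $B$ is trivially graded, say by a group with identity $e$, then the only possibly nonzero homogeneous component of $B$ is $B^{(e)}=B$; since $\gamma$ is graded injective, its restriction to each homogeneous component of $B$ is injective, and applying this to $B^{(e)}=B$ shows that $\gamma$ itself is injective.

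Now I would suppose, for contradiction, that the coequalizer of $f$ and $g$ exists; call it $\gamma\colon B\to C$. By definition $\gamma$ is a morphism of the category and $\gamma f=\gamma g$. By the previous paragraph $\gamma$ is an injective map, so from $\gamma(f(a))=\gamma(g(a))$ for every $a\in A$ we deduce $f(a)=g(a)$ for every $a$, i.e. $f=g$. This contradicts the hypothesis that $f$ and $g$ are distinct morphisms. Hence no coequalizer of $f$ and $g$ exists, and the argument is literally the same in $\widetilde{\mathbf{GrAlg}_F}$ and in $\widetilde{\mathbf{GrAlg}_F^1}$ (in the unital case $\gamma$ is moreover unital, but this plays no role).

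I do not expect any genuine obstacle here. Note that we do not even need the full universal property of the coequalizer: all we use is that it comes equipped with a morphism $\gamma$ of the category satisfying $\gamma f=\gamma g$. The only point to verify carefully is the injectivity of $\gamma$, which is immediate once one unwinds the definition of ``graded injective'' together with the elementary remark that a (nonzero) trivially graded algebra has a single nonzero homogeneous component equal to the whole algebra. Finally, the hypothesis $f\neq g$ already forces $B\neq 0$ (and $A\neq 0$), but this is not needed: if $B=0$ the zero map is vacuously injective and the same contradiction is reached.
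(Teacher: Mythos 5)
Your argument is correct and is exactly the proof in the paper: the coequalizer morphism $h\colon B\to C$ satisfies $hf=hg$ and, being graded injective with $B$ trivially graded, is injective, forcing $f=g$. You merely spell out in more detail why graded injectivity plus triviality of the grading gives injectivity.
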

\begin{proof}
Suppose that there exists a coequalizer $h\colon B \to C$. Then $hf = hg$. Since $h$ is graded injective and the grading on $B$ is trivial, $h$ is injective and $f=g$. We get a contradiction.
\end{proof}

Now we show that monomorphisms in $\widetilde{\mathbf{GrAlg}_F}$ and $\widetilde{\mathbf{GrAlg}_F^1}$
admit the same description as in $\mathbf{GrAlg}_F$ and $\mathbf{GrAlg}_F^1$.

\begin{proposition}\label{PropositionCriterionForMonomorphismTildeGrAlg}
Let $f\colon A=\bigoplus_{g\in G} A^{(g)} \to B$ be a morphism in $\widetilde{\mathbf{GrAlg}_F}$ or in $\widetilde{\mathbf{GrAlg}_F^1}$. Then $f$ is a monomorphism if and only if $a \ne b$ for some $a,b\in \bigcup_{g\in G} A^{(g)}$ always implies $f(a)\ne f(b)$.
\end{proposition}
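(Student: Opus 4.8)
The plan is to mimic the proof of Proposition~\ref{PropositionCriterionForMonomorphismGrAlg}, which establishes exactly this criterion in the larger categories $\mathbf{GrAlg}_F$ and $\mathbf{GrAlg}_F^1$, and check that both directions survive the passage to the subcategories $\widetilde{\mathbf{GrAlg}_F}$ and $\widetilde{\mathbf{GrAlg}_F^1}$, where the morphisms are required to be graded injective.

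For the ``only if'' direction, suppose $f$ is a monomorphism in $\widetilde{\mathbf{GrAlg}_F}$ (respectively $\widetilde{\mathbf{GrAlg}_F^1}$). As in the proof of Proposition~\ref{PropositionCriterionForMonomorphismGrAlg}, I would take $C$ to be the polynomial algebra $F[x]$ with the degree $\mathbb Z$-grading (without constant term in the non-unital case, with constant term in the unital case). The only point to verify beyond the original argument is that the two maps $\lambda, \mu \colon C \to A$ sending $x$ to homogeneous elements $a \neq b$ actually lie in $\widetilde{\mathbf{GrAlg}_F}$, i.e.\ are graded injective. Since $C^{(n)}$ is spanned by $x^n$, the map $\lambda$ restricted to $C^{(n)}$ is injective precisely when $a$ is not nilpotent. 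So I would first choose, for each nonzero homogeneous $a \in A^{(g)}$, the homomorphism $\lambda$ with $\lambda(x)=a$; if $a$ is nilpotent this need not be graded injective, but in that case the scalar multiples of powers of $a$ already force a collision, and one argues directly: if $f(a)=f(b)$ with $a\neq b$ homogeneous, then either both are non-nilpotent (and the two polynomial maps are graded injective, contradicting that $f$ is monic) or some power collapses; the cleanest route is to note that $a$ and $b$ lie in $\bigcup_g A^{(g)}$ and replace $A$ by the free graded algebra on a single homogeneous generator mapping onto $a$ versus $b$ — but since $F[x]$ on a non-nilpotent generator suffices, I would handle the nilpotent case by the observation that if $f(a)=f(b)$ with $a \neq b$ homogeneous of degree $g$, then $f(a+c)=f(b+c)$ for any homogeneous $c$ of degree $g$, so one may translate to a non-nilpotent representative; alternatively one simply invokes that any homogeneous element is the image of $x$ under a graded \emph{injective} map $F[x]\to A$ whenever it is non-nilpotent, and notes every nonzero homogeneous component of $A$ contains a non-nilpotent element only if $A$ is special — so the honest fix is to use the free associative algebra $F\langle x\rangle$ (or $F[x]$) where $x$ has the prescribed degree and observe that the two assignments $x\mapsto a$, $x\mapsto b$ extend to \emph{graded injective} homomorphisms as soon as neither $a$ nor $b$ is nilpotent, and to dispatch the remaining case by a direct contradiction argument on the chain $f(a^k)=f(b^k)$.

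For the ``if'' direction, suppose $f$ has the stated separation property: $a \neq b$ in $\bigcup_{g} A^{(g)}$ implies $f(a)\neq f(b)$. This direction is verbatim the same as in Proposition~\ref{PropositionCriterionForMonomorphismGrAlg}: given graded injective $\alpha, \beta \colon D \to A$ with $f\alpha = f\beta$, for every homogeneous $d \in D$ the elements $\alpha(d), \beta(d)$ are homogeneous in $A$ with $f(\alpha(d))=f(\beta(d))$, hence $\alpha(d)=\beta(d)$; since $D$ is spanned by homogeneous elements, $\alpha = \beta$. Here nothing about graded injectivity of $\alpha,\beta$ is even needed, so the argument transfers with no change.

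The main obstacle is the ``only if'' direction: unlike in $\mathbf{GrAlg}_F$, the test object $F[x]$ with a homogeneous generator need not admit a \emph{graded injective} homomorphism onto an arbitrary homogeneous target element, because nilpotent homogeneous elements are not the image of $x$ under any graded injective map. The fix — reducing to the non-nilpotent case via a homogeneous translation $a \mapsto a+c$, $b\mapsto b+c$, or chasing the chain of powers — is routine but must be spelled out carefully so that the separating pair of morphisms genuinely lives in $\widetilde{\mathbf{GrAlg}_F}$. Once that is in place, the equivalence with the criterion in $\mathbf{GrAlg}_F$ also yields, as in the larger category, that every monomorphism in $\widetilde{\mathbf{GrAlg}_F}$ and $\widetilde{\mathbf{GrAlg}_F^1}$ is graded injective.
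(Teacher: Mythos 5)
Your ``if'' direction is fine and matches the paper: the separation property makes $f$ a monomorphism already in the larger category $\mathbf{GrAlg}_F$ (resp.\ $\mathbf{GrAlg}_F^1$) by Proposition~\ref{PropositionCriterionForMonomorphismGrAlg}, hence a fortiori in the subcategory. You also correctly isolate the real obstacle in the ``only if'' direction: the test maps $F[x]\to A$, $x\mapsto a$, are graded injective only when $a$ is non-nilpotent, so the argument from Proposition~\ref{PropositionCriterionForMonomorphismGrAlg} does not transfer verbatim. Your treatment of the non-nilpotent case is exactly the paper's (note that the hypothesis to run it is naturally phrased on $f(a)$: if $f(a)^k\ne 0$ for all $k$, then $a^k\ne 0$ and $b^k\ne 0$ for all $k$ because $f$ is graded injective and the powers are homogeneous).

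However, the nilpotent case is a genuine gap in your write-up: you list three candidate fixes and commit to none, and the one you lean on most explicitly --- translating by a homogeneous $c$ of the same degree to reach a non-nilpotent representative --- fails in general, since there are graded algebras in which \emph{every} element of a given homogeneous component is nilpotent (e.g.\ $A=F1\oplus Fa$ with $a^2=0$ and $a\in A^{(\bar 1)}$ for the $\mathbb Z/2\mathbb Z$-grading). The ``chain $f(a^k)=f(b^k)$'' is likewise not an argument until you say what test object realizes it. The paper's resolution is to change the test object: if $f(a)$ is nilpotent, let $k$ be minimal with $f(a)^k=0$; graded injectivity forces $a^k=b^k=0$ and $a^{k-1}\ne 0$, $b^{k-1}\ne 0$, so the truncated polynomial algebra $\bar C=F[x]/(x^k)$ (with the induced degree grading) admits \emph{graded injective} homomorphisms $\bar x\mapsto a$ and $\bar x\mapsto b$, and the monomorphism property then gives $a=b$ as before (the case $k=1$ giving $a=b=0$ outright). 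Until you supply this --- or an equivalent working test object for nilpotent homogeneous elements --- the ``only if'' direction is incomplete.
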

\begin{proof}
Suppose $f$ is a monomorphism.
Let $a,b\in \bigcup_{g\in G} A^{(g)}$ such that $f(a)= f(b)$.

Suppose first that $f(a)^k \ne 0$ for all $k\in\mathbb N$.
Since $f$ is graded injective and $a^i$ and $b^j$ are homogeneous,
we have $a^k\ne 0$, $b^k \ne 0$ for all $k\in\mathbb N$.
As before, denote by $C$ the algebra of all polynomials in the variable $x$ with coefficients from $F$ with a constant term in the case of $\mathbf{GrAlg}^1_F$ and
without a constant term in the case of $\mathbf{GrAlg}_F$, endowed with the degree $\mathbb Z$-grading.
Then there exist unique graded injective homomorphisms $\alpha,\beta \colon C \to A$
such that $\alpha(x)=a$, $\beta(x)=b$. We have $f\alpha=f\beta$
since $f(\alpha(x)) = f(a)=f(b)=f(\beta(x))$. Since $f$ is a monomorphism,
we have $\alpha = \beta$ and $a=\alpha(x)=\beta(x)= b$.

Consider now the case when $f(a)$ is nilpotent.
Define $k\in \mathbb N$ by $f(a)^k=0$, $f(a)^{k-1}\ne 0$. (If $f(a)=0$, we put $k:=1$.)
Since $f$ is graded injective and $a^i$ and $b^j$ are homogeneous,
we have $a^k=b^k=0$, $a^{k-1}\ne 0$, $b^{k-1}\ne 0$.
If $k=1$, we have $a=b=0$ and the ``only if'' part is proved.
Suppose $k > 1$. Let $\bar C = C/(x^k)$. Denote by $\bar x$ the image of $x$ in $\bar C$.
Then there exist unique graded injective homomorphisms $\alpha,\beta \colon \bar C \to A$
such that $\alpha(\bar x)=a$, $\beta(\bar x)=b$. We have $f\alpha=f\beta$
since $f(\alpha(\bar x)) = f(a)=f(b)=f(\beta(\bar x))$. Since $f$ is a monomorphism,
we have $\alpha = \beta$ and $a=\alpha(\bar x)=\beta(\bar x)= b$. Now the ``only if'' part is proved completely.

The ``if'' part follows from Proposition~\ref{PropositionCriterionForMonomorphismGrAlg},
since in this case $f$ is a monomorphism even in $\mathbf{GrAlg}_F$ (respectively, in $\mathbf{GrAlg}_F^1$).
\end{proof}

\section{Proof of Theorem~B}\label{SectionAbsenceGrAdjunctions}

In this section we show that, unlike functors considered in Section~\ref{SectionGradedAdjunctions}, the functors
$$R \colon \widetilde{\mathbf{GrAlg}_F} \to \mathbf{Grp}\text{ and }
R_1\colon \widetilde{\mathbf{GrAlg}_F^1}\to \mathbf{Grp}$$
defined in Section~\ref{SectionUniversalGradingGroupFunctors},
which assign to each grading its universal group,
have neither left nor right adjoints. The combination of the following three propositions implies Theorem B.
\begin{proposition}\label{PropositionAbsenceOfLeftAdjointUGGF}
The functor $R$ has no left adjoint.
\end{proposition}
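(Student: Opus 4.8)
The strategy is to show that $R$ cannot preserve colimits, which is a necessary condition for having a right adjoint — wait, that is the wrong direction. A left adjoint preserves colimits, so to show $R$ has \emph{no left adjoint} I would instead use the fact that a functor with a left adjoint must preserve limits, and exhibit a limit that $R$ fails to preserve; alternatively, and more robustly, I would work directly from the definition of adjunction and the universal arrow characterization. The cleanest route is the latter: if $R$ had a left adjoint $L\colon \mathbf{Grp}\to\widetilde{\mathbf{GrAlg}_F}$, then for the trivial group $1$ there would be an initial object in the comma category $(1\downarrow R)$, i.e. a graded algebra $L(1)$ together with a morphism $1\to R(L(1))=G_{L(1)}$ that is universal. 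I would test this against a carefully chosen family of objects to derive a contradiction.

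\textbf{Key steps.} First I would take the free group $\mathcal F(x)\cong\mathbb Z$ on one generator and analyze the would-be unit of the adjunction. For any group $N$, a left adjoint would give a bijection $\widetilde{\mathbf{GrAlg}_F}(L(N),\Gamma)\cong\mathbf{Grp}(N,G_\Gamma)$ natural in $\Gamma$. Second, I would exploit the great flexibility of targets: the functor $R$ sends $FG$ (with its standard grading) to $G$, but it also sends many small nilpotent algebras to the same group in incompatible ways — this is exactly the phenomenon behind Proposition~\ref{PropositionProductGroupAlgebrasTildeGrAlg} and its corollary, where $R(F(F^\times\times G\times H))=F^\times\times G\times H$ rather than $G\times H$. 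The concrete plan is: fix a nontrivial group $G$ and consider morphisms out of the hypothetical $L(G)$ into both the group algebra $FG$ and into a truncated/nilpotent graded algebra whose universal group is also $G$ (for instance, an elementary grading or a $\mathbb Z$-graded polynomial algebra as used in the previous section). Naturality of the adjunction isomorphism forces the image of a chosen homogeneous generator of $L(G)$ to be simultaneously invertible (coming from $FG$) and nilpotent/non-unit (coming from the truncated algebra), which is impossible. Third, I would be careful about units: since this is $\widetilde{\mathbf{GrAlg}_F}$ (non-unital algebras, graded injective homomorphisms), I have more room — a graded injective homomorphism need not send $1$ to $1$ and need not exist at all between certain algebras — so I would pick the test algebras to be the ones already shown useful: $C$, the polynomials in $x$ without constant term with the degree grading, $\bar C=C/(x^k)$, and group algebras $FG$.

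\textbf{Main obstacle.} The delicate part is handling the naturality condition with enough care: I must produce two \emph{parallel} tests that the unit $\eta_N\colon N\to R(L(N))$ must satisfy but cannot. Concretely, if $g\in N$ is a nontrivial element, then on one hand there is a graded algebra (e.g.\ $F N$) with a graded injective map realizing $g$ as an invertible homogeneous element, and on the other hand there is a graded algebra (e.g.\ a $\mathbb Z$-graded truncation, or a matrix algebra with an elementary grading whose universal group is $N$ — available by the result of \cite{GordienkoSchnabel} recalled in the excerpt) in which the corresponding homogeneous component is \emph{zero} or the element is nilpotent, contradicting graded injectivity. Pinning down a single object whose universal group equals $N$ but in which some homogeneous component vanishes, and threading this through the bijection $\widetilde{\mathbf{GrAlg}_F}(L(N),-)\cong\mathbf{Grp}(N,R(-))$ to get a morphism that \emph{must} exist but \emph{cannot}, is where the real work lies. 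I expect the argument will ultimately rest on the same kind of incompatibility that drives the failure of products: $R$ identifies too many non-isomorphic graded algebras, so no single universal object can mediate all group homomorphisms out of $N$.
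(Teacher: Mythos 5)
Your general framework is the right one: assume a left adjoint $K$ exists, use the natural bijection $\widetilde{\mathbf{GrAlg}_F}(K(H),\Gamma)\cong\mathbf{Grp}(H,R(\Gamma))$, and test it against well-chosen objects $\Gamma$. But the decisive step is missing, and the mechanism you sketch for it does not work as stated. Two \emph{different} morphisms $K(H)\to FH$ and $K(H)\to B$ into two unrelated targets impose no joint constraint on the image of a single homogeneous element of $K(H)$: an element may perfectly well go to an invertible element under one homomorphism and to a nilpotent (or zero) element under another. To extract the ``simultaneously invertible and nilpotent'' contradiction you would need the two tests to sit in a single commuting naturality square, i.e.\ an actual morphism between the two target algebras, and you never produce one. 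You acknowledge this yourself (``where the real work lies''), so the proposal stops short of a proof.

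The paper closes the gap by a cardinality argument that exploits non-unitality: if the underlying algebra $A$ of $K(H)$ were nonzero, one could form $\Xi\colon\bigoplus_{\lambda\in\Lambda}A$ for a set $\Lambda$ with $|\Lambda|>|\Hom(H,G_{K(H)})|$, each summand retaining the grading of $K(H)$. Then $G_\Xi\cong G_{K(H)}$, so $|\mathbf{Grp}(H,R(\Xi))|$ stays fixed, while $|\widetilde{\mathbf{GrAlg}_F}(K(H),\Xi)|\geqslant|\Lambda|$ (compose the identity on $A$ with each coordinate inclusion), contradicting the bijection; hence $K(H)=0$, and then $\Gamma=FH$ for nontrivial $H$ gives $|\Hom(H,H)|\geqslant 2$ against a one-element hom-set. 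A minimal repair of your own ``testing'' strategy, closer in spirit to what you wrote, is this: since $\mathbf{Grp}(H,R(\Gamma))$ is never empty (it contains the trivial homomorphism), $\widetilde{\mathbf{GrAlg}_F}(K(H),\Gamma)$ must be nonempty for \emph{every} $\Gamma$; taking $\Gamma$ to be the zero algebra, a graded injective homomorphism $K(H)\to 0$ forces every homogeneous component of $K(H)$ to be zero, whence $K(H)=0$ and the same final contradiction with $FH$ applies. Either of these would complete the argument; as written, your proposal contains neither.
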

\begin{proof} Suppose $K$ is left adjoint for $R$. Then
we have a natural bijection $$\widetilde{\mathbf{GrAlg}_F}(K(H), \Gamma) \to \mathbf{Grp}(H, R(\Gamma)).$$

We claim that for each group $H$ the grading $K(H)$ is a grading
on the zero algebra.
Suppose $K(H) \colon A=\bigoplus_{g\in G} A^{(g)}$ and $A\ne 0$.
Let $\Lambda$ be a set of indices such that $|\Lambda| > |\Hom(H,G_{K(H)})|$.
Consider the direct sum $\bigoplus_{\lambda \in \Lambda} A$
of copies of $A$ where each copy retains its $G$-grading $K(H)$. Denote by $\Xi$ the resulting $G$-grading on $\bigoplus_{\lambda \in \Lambda} A$.
Then $G_{\Xi} \cong G_{K(H)}$, however $$|\widetilde{\mathbf{GrAlg}_F}(K(H), \Xi)|
\geqslant |\Lambda| > |\Hom(H,G_{K(H)})|=|\mathbf{Grp}(H, R(\Xi))|$$
which contradicts the existence of the natural bijection. Hence
for each group $H$ the grading $K(H)$ is a grading on the zero algebra.
In particular, each set $\widetilde{\mathbf{GrAlg}_F}(K(H), \Gamma)$
contains exactly one element.

If $H$ is a nontrivial group
 and $FH$ is its group algebra with the standard grading $\Gamma$, then $\mathbf{Grp}(H, R(\Gamma))=\Hom(H,H)$
contains more than one element (at least the identity map and the homomorphism mapping everything to $1_H$) and we once again get a contradiction. Hence the left adjoint functor for
$R$ does not exist.
\end{proof}

The trick with infinite direct sums works only if the resulting algebra is allowed to be non-unital. On the other hand,
for unital algebras $A$ one can use the existence of the homomorphic embedding $F \cdot 1_A \to A$.

\begin{proposition}\label{PropositionAbsenceOfLeftAdjointUGGF1}
The functor $R_1$ has no left adjoint.
\end{proposition}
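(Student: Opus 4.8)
The plan is to mimic the structure of the proof of Proposition~\ref{PropositionAbsenceOfLeftAdjointUGGF}, but since we are now in the unital setting we cannot use an infinite direct sum of copies of an algebra $A$ (the result would be non-unital). Instead, suppose $K_1$ is a left adjoint for $R_1$, so that for every group $H$ and every object $\Gamma$ of $\widetilde{\mathbf{GrAlg}_F^1}$ we have a natural bijection
\[
\widetilde{\mathbf{GrAlg}_F^1}(K_1(H), \Gamma) \to \mathbf{Grp}(H, R_1(\Gamma)).
\]
As before, I would first argue that $K_1(H)$ must be (a grading on) a very small algebra. Writing $K_1(H) \colon A = \bigoplus_{g\in G} A^{(g)}$, the key observation is that a unital graded injective homomorphism $A \to \Gamma$ is constrained: its value on the identity component, and more generally on any homogeneous element, must land in a homogeneous component of the target, and the induced map on universal groups is a homomorphism $G_{K_1(H)} \to R_1(\Gamma)$. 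The number of such induced homomorphisms is bounded by $|\Hom(H, R_1(\Gamma))|$ via the bijection, so I want to produce, for a suitable target $\Gamma$, many graded injective unital homomorphisms $A \to \Gamma$ that induce the \emph{same} homomorphism on universal groups --- contradiction unless $A$ is extremely rigid.

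The replacement for the infinite direct sum is the following: for a unital algebra $A$ there is always a unital homomorphic embedding $F \cdot 1_A \hookrightarrow A$, i.e. $F$ (with trivial grading) sits inside $A$. Dually, I would look at targets built so that $A$ admits many distinct graded injective unital homomorphisms into them. Concretely I expect to take $\Gamma$ to be a suitable group algebra $FH'$ (or a direct product of copies of $F$, trivially graded, whose universal group is trivial so that $\mathbf{Grp}(H, R_1(\Gamma))$ has a single element), and exploit that the number of unital graded injective homomorphisms from $A$ into such a trivially graded algebra of large dimension --- or into a large power $F^{\Lambda}$ --- grows without bound while $R_1$ of it stays trivial. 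This forces $K_1(H)$ to be graded on $F$ itself (the only unital algebra for which the hom-sets collapse appropriately), so that $\widetilde{\mathbf{GrAlg}_F^1}(K_1(H),\Gamma)$ is a singleton for every $\Gamma$; then taking $\Gamma = FH$ with the standard grading for a nontrivial $H$ gives $|\mathbf{Grp}(H,R_1(\Gamma))| = |\Hom(H,H)| > 1$, contradicting the bijection.

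The main obstacle, and where the argument genuinely differs from the non-unital case, is pinning down exactly which small unital graded algebra $K_1(H)$ is forced to be, and exhibiting the requisite family of graded injective unital homomorphisms into a fixed target without blowing up the universal group of the target. In the non-unital proof one simply takes $|\Lambda|$ copies of $A$; here one must be more careful because the direct product $A^{\Lambda}$ is unital but its universal group can grow with $\Lambda$. I would resolve this by choosing the target to have \emph{trivial} universal group --- for instance a large power of $F$ with the trivial grading, into which any unital graded injective homomorphism from a trivially graded $F$-algebra embeds in many ways once $\dim A \geqslant 1$ --- so that the right-hand side $\mathbf{Grp}(H, R_1(\Gamma))$ stays a singleton while the left-hand side is forced to be large unless $A \cong F$. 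Verifying that $F$ itself cannot serve as $K_1(H)$ for nontrivial $H$ is then the short final step via the group algebra example, exactly as in Proposition~\ref{PropositionAbsenceOfLeftAdjointUGGF}.
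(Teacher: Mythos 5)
Your overall strategy is the same as the paper's: show that $K_1(H)$ must be (a grading on) $F$ itself, then derive a contradiction from $\Gamma=FH$ with $H$ nontrivial. The endgame is fine. The gap is in the middle step, exactly where you flag the ``main obstacle'': the test objects you propose do not force $A\cong F$. A unital homomorphism $A\to F^{\Lambda}$ (trivially graded) is just a $\Lambda$-tuple of unital characters $A\to F$, so if $A$ admits exactly one character $\chi$, then the only unital homomorphism $A\to F^{\Lambda}$ is the diagonal of $\chi$, and the hom-set $\widetilde{\mathbf{GrAlg}_F^1}\bigl(K_1(H),F^{\Lambda}\bigr)$ remains a singleton no matter how large $\Lambda$ is. For instance $A=FG$ with $G$ a group having no nontrivial one-dimensional representations over $F$ passes your test for every $\Lambda$ (the unique character is graded injective since $\chi(u_g)=1\ne 0$), yet $A\not\cong F$. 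So ``a large power of $F$'' does not produce the unbounded family of morphisms your counting argument needs, and the parenthetical claim that $F$ is ``the only unital algebra for which the hom-sets collapse appropriately'' is precisely what remains unproved.

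The paper closes this gap with a two-step argument you almost have in hand, since you already note the embedding $F\cdot 1_A\hookrightarrow A$. First take $\Gamma=\Upsilon$, the trivial grading on $F$: since $\mathbf{Grp}(H,R_1(\Upsilon))$ is a singleton, there exists a unital graded injective homomorphism $\varphi\colon A\to F$, so $\ker\varphi$ is an ideal of codimension one. Then take $\Gamma=\Xi$, the trivial grading on $A$ itself, whose universal group is again trivial, so $\mathbf{Grp}(H,R_1(\Xi))$ is a singleton. If $\ker\varphi\ne 0$, the set $\widetilde{\mathbf{GrAlg}_F^1}(K_1(H),\Xi)$ contains at least two distinct morphisms, namely $\id_A$ and the composite $A\xrightarrow{\ \varphi\ }F\cong F\cdot 1_A\hookrightarrow A$ (both are unital and graded injective, the target being trivially graded). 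This contradiction forces $\ker\varphi=0$, i.e. $A\cong F$, after which your final step with $\Hom(H,H)$ goes through verbatim. Note that this argument only ever needs \emph{two} competing morphisms into a fixed trivially graded target, not an unbounded family, which is why it avoids the cardinality issues that sink the $F^{\Lambda}$ route.
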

\begin{proof} Suppose $K$ is left adjoint for $R_1$. Then
we have a natural bijection $$\widetilde{\mathbf{GrAlg}^1_F}(K(H), \Gamma) \to \mathbf{Grp}(H, R_1(\Gamma)).$$

We claim that for any group $H$ the grading $K(H)$ is the grading
on an algebra isomorphic to $F$.
Indeed, let $H$ be a group and let $K(H) \colon A=\bigoplus_{g\in G} A^{(g)}$.
Denote by $\Upsilon$ the grading on $F$ by the trivial group. Then $\mathbf{Grp}(H, R_1(\Upsilon))$
and $\widetilde{\mathbf{GrAlg}^1_F}(K(H), \Upsilon)$ both
  consist of one element. In particular, there exists a unital graded injective homomorphism
  $\varphi \colon A \to F$. Hence there exists an ideal $\ker \varphi \subsetneqq A$ of codimension $1$.
  Now denote by $\Xi$ the grading on $A$ by the trivial group.
  If $\ker \varphi\ne 0$, then $\widetilde{\mathbf{GrAlg}_F^1}(K(H), \Xi)$
  consists of at least two different elements:
  \begin{enumerate}
    \item the identity map $A\to A$;
    \item the composition of the following two graded injective homomorphisms: $\varphi$
    and the embedding $F \cdot 1_A \to A$.
  \end{enumerate}
   Since $R_1(\Xi)$ is the trivial group and $\mathbf{Grp}(H, R_1(\Xi))$    consists of a single element, we get $\ker \varphi=0$ and $A \cong F$.

    In particular, $\widetilde{\mathbf{GrAlg}_F^1}(K(H), \Gamma)$ contains exactly one element for all $H$ and $\Gamma$.
  Considering  a nontrivial group $H$
   and its group algebra $FH$  with the standard grading $\Gamma$, we obtain that $\mathbf{Grp}(H, R(\Gamma))=\Hom(H,H)$ contains more than one element and we get a contradiction. Hence the left adjoint functor for $R_1$ does not exist.
\end{proof}

The proof of the absence of right adjoints is identical for both $R$ and $R_1$.

\begin{proposition}\label{PropositionAbsenceOfRightAdjointsUGGF}
The functors $R$ and $R_1$ have no right adjoints.
\end{proposition}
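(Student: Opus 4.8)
The plan is to show that a right adjoint cannot exist because it would force $R$ (equivalently $R_1$) to preserve colimits, and we will violate this with a coequalizer (or a coproduct). A right adjoint preserves limits, so a \emph{left} adjoint to the opposite picture is not the point; what we need is that a functor admitting a right adjoint preserves colimits (see e.g.~\cite[Chapter~V, Section 5, Theorem~1]{MacLaneCatWork}). So I would exhibit a colimit in $\widetilde{\mathbf{GrAlg}_F}$ (resp.\ $\widetilde{\mathbf{GrAlg}^1_F}$) whose image under $R$ is not the corresponding colimit in $\mathbf{Grp}$. The cleanest candidate is a coproduct: the coproduct of $FG$ and $FH$ in $\widetilde{\mathbf{GrAlg}^1_F}$ is $F(G*H)$ (as noted in the excerpt just before Proposition~\ref{PropositionCoproductDoesntExistTilde}), and $R_1(F(G*H)) = G*H$, which is indeed $R_1(FG) * R_1(FH)$ in $\mathbf{Grp}$ --- so coproducts of group algebras are preserved, and this example is \emph{not} an obstruction. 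Hence the argument must use a different colimit, and I expect the right tool is a coequalizer.

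Concretely, I would pick two parallel graded injective homomorphisms $f, g \colon A \to B$ in $\widetilde{\mathbf{GrAlg}^1_F}$ whose coequalizer $h\colon B \to C$ exists, but such that the coequalizer of $R(f), R(g)$ in $\mathbf{Grp}$ differs from $R(C)$. A promising choice: let $G = \mathcal F(x)$ be infinite cyclic, take $B = FG$ with its standard grading, let $A = F[t]$ with the degree grading (so $R(A)$ is infinite cyclic generated by $[t]$), and let $f, g\colon A \to B$ be the unital graded injective homomorphisms $f(t) = u_x$, $g(t) = u_{x^2}$ (both send the degree-$1$ component injectively into a single homogeneous component of $FG$). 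One checks $R(f)$ sends the generator of $R(A)$ to $x$ and $R(g)$ sends it to $x^2$, so the coequalizer of $R(f), R(g)$ in $\mathbf{Grp}$ is $\mathcal F(x)/\langle\!\langle x \rangle\!\rangle$, the trivial group. On the algebra side, however, $h\colon FG \to C$ must equalize nothing forced beyond $h(u_x) = h(u_{x^2})$, hence $h(u_x) = h(u_x)^2$, so $h(u_x)$ is a homogeneous idempotent; in a universal recipient this should give $h(u_x) = 1$, forcing $C \cong F$ and $R(C)$ trivial --- which would match. So I would instead engineer the parallel pair so that $R$ of the coequalizer is \emph{too large}: the subtlety exploited elsewhere in the paper is that graded-injective homomorphisms out of a group algebra can carry an extra scalar (cf.\ the appearance of $F^\times$ in Proposition~\ref{PropositionProductGroupAlgebrasTildeGrAlg}), and the universal grading group remembers strictly more than the groups do. I would use a coequalizer diagram whose algebra-level coequalizer is a group algebra $FK$ with $K$ a proper quotient-by-relations that the group-level coequalizer does not see, or conversely sees too much.

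The key steps, in order, are: (i) recall that a functor with a right adjoint preserves all colimits, so it suffices to produce one non-preserved colimit; (ii) construct an explicit parallel pair $f, g$ (or a small diagram) in $\widetilde{\mathbf{GrAlg}_F}$ and in $\widetilde{\mathbf{GrAlg}^1_F}$ together with its coequalizer $C$ --- here I would lean on the fact that group algebras with standard gradings are objects on which graded injective homomorphisms are very constrained, and compute $C$ by hand; (iii) compute $R(C) = G_C$ from the presentation $G_C \cong \mathcal F([\supp C])/N$ recalled in the excerpt; (iv) separately compute the coequalizer of $R(f), R(g)$ in $\mathbf{Grp}$ as a quotient of a free product by normal closures of the evident relations; (v) observe the two groups are non-isomorphic, contradicting preservation. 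I expect step (ii)--(iii) to be the main obstacle: I need a parallel pair for which the algebra-level coequalizer genuinely \emph{exists} in $\widetilde{\mathbf{GrAlg}_F}$ (colimits there are fragile, as the earlier propositions show) and is computable, yet whose universal group is provably not the group-theoretic coequalizer. The safest route is probably to mimic the $F^\times$-phenomenon: choose the diagram so that the algebra coequalizer is forced to introduce a new central homogeneous unit, inflating $R(C)$ by an extra $F^\times$ factor that has no counterpart on the $\mathbf{Grp}$ side, exactly as in the Corollary to Proposition~\ref{PropositionProductGroupAlgebrasTildeGrAlg} where $R$ failed to preserve products for the same reason.
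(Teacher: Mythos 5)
Your overall strategy (a functor with a right adjoint is itself a left adjoint, hence must preserve colimits, so one non-preserved colimit suffices) is sound in principle, but the proof has a genuine gap: you never actually produce the required counterexample. Your one concrete candidate ($f(t)=u_x$, $g(t)=u_{x^2}$) fails, as you yourself observe, and the fallback --- ``engineer the diagram so that the algebra-level coequalizer introduces an extra $F^\times$ factor'' --- is only a heuristic, with steps (ii)--(iii) explicitly flagged as unresolved. This is not a minor omission: the paper's own results show that colimits in $\widetilde{\mathbf{GrAlg}_F}$ and $\widetilde{\mathbf{GrAlg}^1_F}$ are fragile (coproducts of group algebras fail to exist in $\widetilde{\mathbf{GrAlg}_F}$, coequalizers into trivially graded algebras never exist), so establishing that your chosen colimit \emph{exists} is itself a nontrivial obstacle, and the $F^\times$ inflation that breaks product preservation has no evident analogue on the colimit side, where the group-theoretic coequalizer is a quotient rather than a product.

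The paper's proof avoids colimits entirely and is much more robust: it is a cardinality argument. If $K$ were a right adjoint, one would have a natural bijection $\mathbf{Grp}(R(\Gamma),H)\to\widetilde{\mathbf{GrAlg}_F}(\Gamma,K(H))$, and the left-hand side is never empty (it contains the trivial homomorphism). But writing $K(H)\colon A=\bigoplus_{g\in G}A^{(g)}$ and taking $\Gamma$ to be a trivially graded algebra $B$ with $|B|>|A|$ (e.g.\ $B=\End_F V$ for $V$ of large dimension), any morphism $\Gamma\to K(H)$ would have to embed $B$ injectively into a single homogeneous component $A^{(g)}$, which is impossible for cardinality reasons; hence the right-hand side is empty, a contradiction. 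You may want to note this ``no solution set'' style of argument as the standard way to kill adjoints when the candidate adjoint would have to receive morphisms from arbitrarily large objects; it sidesteps all questions about which (co)limits exist.
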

\begin{proof}
The proof is the same for both $R$ and $R_1$.
For definiteness consider the case of $R$.

Suppose $K$ is right adjoint for $R$. Then
we have a natural bijection $$\mathbf{Grp}(R(\Gamma), H) \to \widetilde{\mathbf{GrAlg}_F}(\Gamma, K(H)).$$

Note that the set $\mathbf{Grp}(R(\Gamma), H)$ is always
non-empty since it contains at least the homomorphism that maps everything to $1_H$. Fix a group $H$. Let $K(H) \colon A=\bigoplus_{g\in G} A^{(g)}$.
Let $B$ be an $F$-algebra with the cardinality $|B|$
that is greater than the cardinality $|A|$. For example,
$B = \End_F V$ where $V$ is a vector space with a basis that has a cardinality greater than $|A|$. Let $\Gamma$ be the grading on $B$ by the trivial group. Then for every $g\in G$ there exist no injective maps $B \to A^{(g)}$
and therefore the set $\widetilde{\mathbf{GrAlg}_F}(\Gamma, K(H))$
is empty. We get a contradiction. Therefore the right adjoint functor for
$R$ does not exist.
\end{proof}

The results above alongside with their proofs show that, in order to get indeed an adjuction, one must restrict the category of algebras
to algebras that are well determined by their universal grading group.

\section{Adjunction in the case of group algebras}\label{SectionFunctorGroupAlgebra}

Let $F$ be a field and let $\mathbf{Grp}'_F$ be the category where the objects
are groups $G$ that do not have non-trivial one dimensional representations (in other words, $H^1(G, F^{\times}) = 0$) and the morphisms are all group homomorphisms. Let $\mathbf{GrpAlg}'_F$ be the category
where the objects are group algebras $FG$ of groups $G$ from $\mathbf{Grp}'_F$
with the standard grading
and the morphisms are all non-zero graded homomorphisms.
Let $U$ be the functor $\mathbf{GrpAlg}'_F \to \mathbf{Grp}'_F$
defined by $U(FG)=G$ and $\varphi\left(FG_1^{(g)}\right)\subseteq FG_2^{\left(U(\varphi)(g) \right)}$
 for $\varphi \colon FG_1 \to FG_2$.
Denote by $F-$ the functor which associates to each group its group algebra over the field $F$.

\begin{proposition}\label{PropositionEquivGroupAlgGroups}
For every $G \in  \mathbf{Grp}'_F$
and $A\in \mathbf{GrpAlg}'_F$ there exists a bijection $\theta_{G, A} \colon \mathbf{GrpAlg}'_F(FG, A)
\to \mathbf{Grp}'_F(G, U(A))$ which is natural in $A$ and $G$. Furthermore, $FU(-)= 1_{\mathbf{GrpAlg}'_F}$
and $U(F-)= 1_{\mathbf{Grp}'_F}$, i.e. the categories
$\mathbf{Grp}'_F$ and $\mathbf{GrpAlg}'_F$ are isomorphic.
\end{proposition}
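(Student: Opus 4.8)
The plan is to prove first the stronger assertion that $F-$ and $U$ are mutually inverse functors; the natural bijection $\theta_{G,A}$ then comes for free by setting $\theta_{G,A}(\varphi):=U(\varphi)$, with inverse $\psi\mapsto F\psi$, naturality in both variables being nothing but functoriality of $U$ and $F-$. I would begin by checking that both functors are well defined. For $F-$ this is routine: a group homomorphism $\phi\colon G_1\to G_2$ gives the algebra homomorphism $F\phi\colon FG_1\to FG_2$, $u_g\mapsto u_{\phi(g)}$, which carries $FG_1^{(g)}=Fu_g$ onto $FG_2^{(\phi(g))}$, hence is a nonzero graded homomorphism, i.e.\ a morphism of $\mathbf{GrpAlg}'_F$. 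For $U$ I must check that a morphism $\varphi\colon FG_1\to FG_2$ of $\mathbf{GrpAlg}'_F$, that is, a \emph{nonzero} graded homomorphism, induces a well-defined map of grading groups; for this I need $\varphi\bigl(FG_1^{(g)}\bigr)\ne 0$ for every $g\in G_1$.

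The heart of the matter is a description of the morphisms of $\mathbf{GrpAlg}'_F$. Let $\varphi\colon FG_1\to FG_2$ be such a morphism, write $e$ for the identity of $G_1$, and let $\phi(g)\in G_2$ denote the element with $\varphi\bigl(FG_1^{(g)}\bigr)\subseteq FG_2^{(\phi(g))}$ whenever the left-hand side is nonzero. Since $u_e$ is idempotent, $\varphi(u_e)$ is an idempotent in the one-dimensional space $FG_2^{(\phi(e))}=Fu_{\phi(e)}$, say $\varphi(u_e)=\mu u_{\phi(e)}$. If $\mu\ne 0$, comparing $\mu^2u_{\phi(e)^2}$ with $\mu u_{\phi(e)}$ in the basis $(u_h)_{h\in G_2}$ forces $\phi(e)^2=\phi(e)$, hence $\phi(e)=1_{G_2}$ and $\mu=1$; if $\mu=0$ then $\varphi(u_g)=\varphi(u_gu_e)=\varphi(u_g)\varphi(u_e)=0$ for all $g$, contradicting $\varphi\ne 0$. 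Hence $\varphi(u_e)=1_{FG_2}$, and therefore $\varphi(u_g)\varphi(u_{g^{-1}})=\varphi(u_e)=1_{FG_2}$, so each $\varphi(u_g)$ is invertible, in particular nonzero; writing $\varphi(u_g)=\lambda_gu_{\phi(g)}$ gives $\lambda_g\in F^{\times}$. In particular $\varphi$ is graded injective, the induced map $U(\varphi):=\phi$ is well defined, and functoriality of $U$ is clear, since composing graded homomorphisms composes the induced maps on supports.

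With this description in hand, $U(F-)=1_{\mathbf{Grp}'_F}$ is immediate: $U(FG)=G$, and $U(F\phi)=\phi$ because $F\phi$ sends $Fu_g$ onto $Fu_{\phi(g)}$. For $FU(-)=1_{\mathbf{GrpAlg}'_F}$ we have $FU(FG)=FG$ on objects, while for a morphism $\varphi\colon FG_1\to FG_2$ with $\phi=U(\varphi)$ we write $\varphi(u_g)=\lambda_gu_{\phi(g)}$ as above. Multiplicativity of $\varphi$ yields $\lambda_{gh}u_{\phi(gh)}=\lambda_g\lambda_hu_{\phi(g)\phi(h)}$, and since both sides are nonzero this gives $\phi(gh)=\phi(g)\phi(h)$ together with $\lambda_{gh}=\lambda_g\lambda_h$. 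Thus $\lambda\colon G_1\to F^{\times}$, $g\mapsto\lambda_g$, is a one-dimensional representation of $G_1$; since $G_1\in\mathbf{Grp}'_F$ we have $H^1(G_1,F^{\times})=0$, so $\lambda\equiv 1$, whence $\varphi(u_g)=u_{\phi(g)}=(F\phi)(u_g)$ and $\varphi=FU(\varphi)$. Consequently $F-$ and $U$ are inverse isomorphisms of categories, and $\theta_{G,A}:=U\colon\mathbf{GrpAlg}'_F(FG,A)\to\mathbf{Grp}'_F(G,U(A))$ is a bijection with inverse $\psi\mapsto F\psi$, natural in $G$ and in $A$.

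The only genuine obstacle I anticipate is the final step, where the scalar function $g\mapsto\lambda_g$ must be killed: this is exactly what $H^1(G,F^{\times})=0$ provides. Without that hypothesis a morphism $FG_1\to FG_2$ is a group homomorphism $G_1\to G_2$ \emph{twisted} by a character of $G_1$, so $\mathbf{GrpAlg}'_F(FG,A)$ would be strictly larger than $\mathbf{Grp}'_F(G,U(A))$ and the bijection would fail; everything else --- well-definedness of the two functors, functoriality, and naturality of $\theta$ --- is routine bookkeeping once the description of morphisms obtained above is available.
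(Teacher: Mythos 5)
Your proposal is correct and follows essentially the same route as the paper: show every nonzero graded homomorphism $FG_1\to FG_2$ is automatically graded injective of the form $u_g\mapsto\lambda_g u_{\phi(g)}$ with $\phi$ a group homomorphism and $\lambda$ a character, then use $H^1(G_1,F^{\times})=0$ to force $\lambda\equiv 1$. The only (harmless) difference is how nonvanishing of each $\varphi(u_g)$ is obtained --- you argue via the idempotent $\varphi(u_e)=1$ and invertibility, while the paper factors $u_{g_0}=u_{g_0 g^{-1}}u_g$ --- which does not change the substance.
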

\begin{proof}
Let $\varphi\colon FG \to FH$ be a non-zero graded homomorphism.
Then $\varphi(u_{g_0}) \ne 0$ for some $g_0\in G$ and
$\varphi(u_{g_0})=\varphi(u_{g_0} u_g^{-1}) \varphi(u_g)\ne 0$
implies $\varphi(u_g)\ne 0$ for all $g\in G$. Hence $\varphi$ is graded injective. Therefore $\varphi$ is determined by group homomorphisms $\psi \colon G \to H$ and $\alpha \colon G \to F^\times$ such that $\varphi(u_g) = \alpha(g)u_{\psi(g)}$ for $g\in G$.
Since $G$ does not have non-trivial one dimensional representations,
$\alpha$ is trivial and we have the natural bijection $\theta_{G, A}$. The equalities $FU(-)= 1_{\mathbf{GrpAlg}'_F}$
and $U(F-)= 1_{\mathbf{Grp}'_F}$ are verified directly.
\end{proof}

{\bf Acknowledgements.}
We are grateful to the referee for his/her helpful and valuable comments and suggestions.

\end{document}